\newcommand{\harxiv}[1]{ \href{http://arxiv.org/abs/#1}{\texttt{arXiv:#1}}}
\newcommand{\hyref}[2]{ \hyperref[#2]{#1~\ref*{#2}} }
\newcommand{\coloneqq}{\mathrel{\mathop:}=}
\newcommand{\Canakci}{\c{C}anak\c{c}\i}
\renewcommand{\comment}[1]{{}}
\theoremstyle{plain}
\newtheorem{theorem}{Theorem}[section]
\newtheorem{lemma}[theorem]{Lemma}
\newtheorem{corollary}[theorem]{Corollary}
\newtheorem{proposition}[theorem]{Proposition}
\theoremstyle{definition}
\newtheorem{remark}[theorem]{Remark}
\newtheorem{example}[theorem]{Example}
\newtheorem{definition}[theorem]{Definition}
\newcommand{\sD}{\mathsf{D}}
\newcommand{\sK}{\mathsf{K}}
\DeclareMathAlphabet{\mathpzc}{OT1}{pzc}{m}{it}
\newcommand{\bC}{\mathbb{C}}
\newcommand{\inv}[1]{\overline{#1}}
\newcommand{\diaginv}[1]{\overline{#1}}
\newcommand{\Db}{\sD^b}
\newcommand{\KminusL}{\sK^{b,-}(\proj{\Lambda})}
\newcommand{\kk}{{\mathbf{k}}}
\renewcommand{\emptyset}{\varnothing}
\DeclareMathOperator{\Hom}{\mathrm{Hom}}
\newcommand{\proj}[1]{\mathsf{proj}(#1)}
\newcommand{\too}{\longrightarrow}
\newcommand{\rightlabel}[1]{\stackrel{#1}{\longrightarrow}}
\newcommand{\length}[1]{\mathrm{len}(#1)}
\newcommand{\B}{B^{\bullet}}
\renewcommand{\P}{P^{\bullet}}
\newcommand{\Q}{Q^{\bullet}}
\newcommand{\M}{M^{\bullet}}
\newcommand{\N}{N^{\bullet}}
\newcommand{\f}{f^{\bullet}}
\renewcommand{\i}{i^{\bullet}}
\newcommand{\xydot}{{\bullet}}
\newcommand{\arr}{\ar@{-}[r]}
\newenvironment{pmat}{\left[ \begin{smallmatrix}}{\end{smallmatrix} \right]}
\renewcommand{\phi}{\varphi}
\renewcommand{\epsilon}{\varepsilon}
\begin{document}

\title{Addendum and Erratum: Mapping cones for morphisms involving a band complex in the bounded derived category of a gentle algebra}

\author{\.{I}lke \Canakci} 
\address{Department of Mathematics, VU, Amsterdam, Amsterdam, 1081 HV, The Netherlands.}
\email{i.canakci@vu.nl}

\author{David Pauksztello} 
\address{Department of Mathematics and Statistics, Lancaster University, Lancaster, LA1 4YF, United Kingdom}
\email{d.pauksztello@lancaster.ac.uk}

\author{Sibylle Schroll}
\address{Department of Mathematics, University of Leicester, University Road, Leicester, LE1 7RH, United Kingdom.}
\email{schroll@leicester.ac.uk}

\keywords{bounded derived category, gentle algebra, homotopy string and band, string combinatorics, mapping cone
}
\thanks{This work has been supported through the EPSRC grant EP/P016014/1 for the first named author and the EPSRC Early Career Fellowship EP/P016294/1 for the third named author.}

\subjclass[2010]{18E30, 16G10, 05E10}
%05E10: Combinatorial aspects of representation theory
%16G10: Representations of Artinian rings
%18E30: Derived categories, triangulated categories

\begin{abstract}
In this note we correct two oversights in [{\it Mapping cones in the bounded derived category of a gentle algebra}, J. Algebra {\bf 530} (2019), 163--194] which only occur when a band complex is involved. 
As a consequence we see that the mapping cone of a morphism between two band complexes can decompose into arbitrarily many indecomposable direct summands.
\end{abstract}

\maketitle

%=========================================================================================
% SECTION
%\section*{Introduction}
%=========================================================================================

Let $\Lambda$ be a gentle algebra and consider indecomposable complexes $\P$ and $\Q$ in the bounded derived category $\Db(\Lambda)$. In \cite{ALP}, a canonical basis for $\Hom_{\Db(\Lambda)}(\P,\Q)$ is given in terms of homotopy string and band combinatorics \cite{BM,Bo,BD}.
Given an element of the canonical basis $\f \in \Hom_{\Db(\Lambda)}(\P,\Q)$, in \cite{CPS1}, we compute the decomposition of its mapping cone $\M_{\f}$ into indecomposable string and band complexes. This is then applied in \cite{CPS2} to give an explicit description of the Ext spaces between indecomposable modules over a gentle algebra.
Furthermore, the computation in \cite{CPS1} has been applied in \cite{OPS} (see also \cite{LP}) to give an interpretation of the mapping cone calculus in the context of Fukaya categories of surfaces.

Recall from \cite{ALP,CPS1} that if $\f$ is either a graph map or an element of the homotopy class determined by a quasi-graph map, then $\f$ is determined by an overlap in the homotopy string or bands corresponding to $\P$ and $\Q$; see Section~\ref{sec:finite} for precise details.
Unfortunately, in \cite{CPS1}, when at least one of $\P$ or $\Q$ is a band complex we made the following oversights.
\begin{itemize}
\item The case analysis for elements of the canonical basis corresponding to (quasi-)graph maps was not complete. Namely, we did not consider the cases of (quasi-)graph maps in which the overlap is longer than at least one homotopy band.
\item In the case that both $\P$ and $\Q$ are band complexes, in the description of $\M_{\f}$, the resulting combinatorial word may be a nontrivial power of a homotopy band. (In \cite{CPS1}, we erroneously claimed the resulting word was a homotopy band.)
\end{itemize}
In this note, we rectify these oversights. 
A remarkable consequence of these corrections is that typically the mapping cone of a morphism between two band complexes decomposes into more than one indecomposable direct summand.

Let us briefly outline the content of the note. In Section~\ref{sec:finite} we discuss the cases omitted from consideration in \cite{CPS1} and set up some notation and terminology to treat them. In Section~\ref{sec:graph-maps} we describe the mapping cones of graph maps in which at least one indecomposable complex is a band complex, correcting the statements of \cite[Prop. 2.9, 2.11 \& 2.12]{CPS1}. A key observation here is Lemma~\ref{lem:power-of-band}, which explains how to modify the word combinatorics in the case that the resulting word is a nontrivial power of a band. For this lemma, one needs the hypothesis that the ground field is algebraically closed. In Section~\ref{sec:quasi}, we look at quasi-graph maps and extend and correct the statement of \cite[Prop. 5.2]{CPS1}. Finally, in Section~\ref{sec:singleton}, in light of Lemma~\ref{lem:power-of-band}, we correct the descriptions of the mapping cones of single and double maps involving two band complexes, \cite[Prop. 3.4 \& 4.2]{CPS1}.
Throughout this note we shall use the notation and terminology from \cite{CPS1} without further mention.

\subsection*{Acknowledgment}

The authors are grateful to Rosanna Laking whose question on \cite{CPS2} highlighted the cases which were not considered in \cite{CPS1}.
The second named author would also like to thank Raquel Coelho Sim\~oes for useful conversations about this note and Karin Baur and Raquel Coelho Sim\~oes for sharing a preliminary version of \cite{BCS-addendum}.

%========================================================================================
% SECTION
\section{Finite versus infinite} \label{sec:finite}
%======================================================================================== 

Throughout this section, $\Lambda$ will be a gentle algebra over $\kk$. We start by defining the length of a (finite) homotopy string or homotopy band.

\begin{definition} \label{def:length}
Let $\sigma = \sigma_n \cdots \sigma_2 \sigma_1$ be a homotopy letter partition of a finite homotopy string or homotopy band. The \emph{length of $\sigma$} is $\length{\sigma} \coloneqq n$.
Note that if $\sigma$ is a homotopy band then $\length{\sigma}$ is even and at least $2$.
\end{definition}

Throughout this note we shall be considering graph maps and quasi-graph maps between indecomposable complexes in the bounded derived category such that at least one of those complexes is a band complex. 
Let $\sigma$ and $\tau$ be homotopy strings or bands. Recall from \cite[\S 1.4.1]{CPS1} or \cite[\S 3.2]{ALP},
\begin{itemize}
\item if $\f \colon \Q_\sigma \to \Q_\tau$ is a graph map then $\f$ is determined by a (possibly trivial) maximal common homotopy substring $\rho$ subject to certain endpoint conditions; and,
\item if $\phi \colon \Q_\sigma \rightsquigarrow \Sigma^{-1} \Q_\tau$ is a quasi-graph map then $\phi$ is also determined by a (possibly trivial) maximal common homotopy substring $\rho$ subject to another set of endpoint conditions.
\end{itemize}
In both cases, we shall refer to the maximal common homotopy substring $\rho$ determining the (quasi-)graph map as the \emph{overlap}. The \emph{length of the overlap} $\rho$ is defined as in Definition~\ref{def:length} above.
If $\sigma$ and $\tau$ are either infinite homotopy strings or homotopy bands, it is possible for the overlap to be infinite.

\subsection{The unfolded diagram of a homotopy band} \label{sec:unfolded}

For this section, we refer to \cite[\S 2.2]{ALP}. Let $\lambda \in \kk^*$ and suppose $(\sigma,\lambda)$ is a homotopy band. Write $\sigma = \sigma_n \cdots \sigma_1$, where we assume, for now, without loss of generality that $\sigma_1$ is a direct homotopy letter. Then the \emph{unfolded diagram of a homotopy band} is an infinite repeating diagram,
\[
\xymatrix{ \cdots \arr^-{\sigma_2} & \xydot \ar[r]^-{\lambda \sigma_1} & \xydot \arr^-{\sigma_n} & \xydot \ar@{..}[r] & \xydot \ar[r]^-{\lambda \sigma_1} & \xydot \arr^-{\sigma_n} & \xydot \arr^-{\sigma_{n-1}} & \cdots}.
\]
By convention in the following, we shall always assume that the scalar $\lambda$ is placed on a direct homotopy letter. Note that the number of direct homotopy letters in a homotopy band is precisely half the number of homotopy letters.

Given this description of the unfolded diagram, for a homotopy band $\sigma$ we write ${}^\infty \sigma^\infty$ for the word formed by infinitely many concatenations of the homotopy band $\sigma$. In particular, overlaps $\rho$ determining (quasi-)graph maps involving a band complex $\B_{\sigma,\lambda}$ are therefore homotopy substrings of the infinite word ${}^\infty \sigma^\infty$. In particular, one now immediately sees how overlaps determining (quasi-)graph maps which are `longer' than a homotopy band occur. 
Below we give an example illustrating both the graph map and quasi-graph map situation.

\begin{example} \label{ex:two-kroneckers}
Let $\Lambda$ be given by the following quiver with relations.
\[
\begin{tikzpicture}[scale=1.2]
\node (A) at (0,0){$3$};
\node (B) at (1,0){$1$};
\node (C) at (2,0){$2$};
\draw[transform canvas={yshift=-2.8pt},->] (A) -- node[below,scale=.8]{$d$} (B);
\draw[transform canvas={yshift=2.8pt},->] (A) --  node[above,scale=.8]{$c$} (B);
\draw[transform canvas={yshift=-2.8pt},->] (B) -- node[below,scale=.8]{$b$} (C);
\draw[transform canvas={yshift=2.8pt},->] (B) -- node[above,scale=.8]{$a$} (C);
\draw[thick,dotted] (1.25,.1) arc (20:160:.32cm)
(1.25,-.1) arc (-20:-160:.32cm) 
;
\end{tikzpicture}
\]
Let $\sigma = d \inv{c}  \, \inv{a} b (d \inv{c})^2 \inv{a} b$ 
and $\tau = (d \inv{c})^2 \inv{a} b$. 
The following unfolded diagram determines a quasi-graph map $\B_{\sigma,1} \rightsquigarrow \B_{\tau,1}$,
\begin{equation} \label{unfolded}
\begin{tikzpicture}
\node[scale=.6] at (0,0){$
\begin{tikzpicture}
\matrix (m) [matrix of math nodes,row sep=2em,column sep=2.4em,minimum width=2em]
  {
   2 & 1 & 3 & 1 & 2 & 1 & 3 & 1 & 3 & 1 & 2 & 1 & 3 & 1 & 2 & 1 \\
   3 & 1 & 3 & 1 & 2 & 1 & 3 & 1 & 3 & 1 & 2 & 1 & 3 & 1 & 3 & 1\\};
     
\draw[->] (m-1-1)-- node [midway,anchor=south west,scale=.9]{$b$} (m-1-2);
\draw[<-] (m-2-1)-- node [anchor=north west,scale=.9]{$\bar{c}$} (m-2-2);     

\draw[->]  (m-1-2)-- node [above,scale=.9]{$d$} (m-1-3);
\draw[->]  (m-2-2)-- node [below,scale=.9]{$d$} (m-2-3);

\draw[<-] (m-1-3)-- node [above,scale=.9]{$\bar{c}$} (m-1-4);
\draw[<-] (m-2-3)-- node [below,scale=.9]{$\bar{c}$} (m-2-4);

\draw[<-] (m-1-4)-- node [above,scale=.9]{$\bar{a}$} (m-1-5);
\draw[<-] (m-2-4)-- node [below,scale=.9]{$\bar{a}$} (m-2-5);

\draw[->] (m-1-5)-- node [above,scale=.9]{$b$} (m-1-6);
\draw[->] (m-2-5)-- node [below,scale=.9]{$b$} (m-2-6);

\draw[->] (m-1-6)-- node [above,scale=.9]{$d$} (m-1-7);
\draw[->] (m-2-6)-- node [below,scale=.9]{$d$} (m-2-7);

\draw[<-] (m-1-7)-- node [above,scale=.9]{$\bar{c}$} (m-1-8);
\draw[<-] (m-2-7)-- node [below,scale=.9]{$\bar{c}$} (m-2-8);

\draw[->] (m-1-8)-- node [above,scale=.9]{$d$} (m-1-9);
\draw[->] (m-2-8)-- node [below,scale=.9]{$d$} (m-2-9);

\draw[<-] (m-1-9)-- node [above,scale=.9]{$\bar{c}$} (m-1-10);
\draw[<-] (m-2-9)-- node [below,scale=.9]{$\bar{c}$} (m-2-10);

\draw[<-] (m-1-10)-- node [above,scale=.9]{$\bar{a}$} (m-1-11);
\draw[<-] (m-2-10)-- node [below,scale=.9]{$\bar{a}$} (m-2-11);

\draw[->] (m-1-11)-- node [above,scale=.9]{$b$} (m-1-12);
\draw[->] (m-2-11)-- node [below,scale=.9]{$b$} (m-2-12);

\draw[->] (m-1-12)-- node [above,scale=.9]{$d$} (m-1-13);
\draw[->] (m-2-12)-- node [below,scale=.9]{$d$} (m-2-13);

\draw[<-] (m-1-13)-- node [above,scale=.9]{$\bar{c}$} (m-1-14);
\draw[<-] (m-2-13)-- node [below,scale=.9]{$\bar{c}$} (m-2-14);

\draw[<-] (m-1-14)-- node [above,scale=.9]{$\bar{a}$} (m-1-15);
\draw[->] (m-2-14)-- node [below,scale=.9]{$d$} (m-2-15);

\draw[->] (m-1-15)-- node [above,scale=.9]{$b$} (m-1-16);
\draw[<-] (m-2-15)-- node [below,scale=.9]{$\bar{c}$} (m-2-16);

%===============
% identity maps  
%===============

\draw[transform canvas={xshift=-1.5pt}] (m-1-2) -- node [anchor=south east,scale=.9]{} (m-2-2);
\draw[transform canvas={xshift=1.5pt}] (m-1-2) --  (m-2-2);

\draw[transform canvas={xshift=-1.5pt}] (m-1-3) -- (m-2-3);
\draw[transform canvas={xshift=1.5pt}] (m-1-3) --  (m-2-3);

\draw[transform canvas={xshift=-1.5pt}] (m-1-4) -- (m-2-4);
\draw[transform canvas={xshift=1.5pt}] (m-1-4) --  (m-2-4);

\draw[transform canvas={xshift=-1.5pt}] (m-1-5) -- (m-2-5);
\draw[transform canvas={xshift=1.5pt}] (m-1-5) --  (m-2-5);

\draw[transform canvas={xshift=-1.5pt}] (m-1-6) -- (m-2-6);
\draw[transform canvas={xshift=1.5pt}] (m-1-6) --  (m-2-6);

\draw[transform canvas={xshift=-1.5pt}] (m-1-7) -- (m-2-7);
\draw[transform canvas={xshift=1.5pt}] (m-1-7) --  (m-2-7);

\draw[transform canvas={xshift=-1.5pt}] (m-1-8) -- (m-2-8);
\draw[transform canvas={xshift=1.5pt}] (m-1-8) --  (m-2-8);

\draw[transform canvas={xshift=-1.5pt}] (m-1-9) -- (m-2-9);
\draw[transform canvas={xshift=1.5pt}] (m-1-9) --  (m-2-9);

\draw[transform canvas={xshift=-1.5pt}] (m-1-10) -- (m-2-10);
\draw[transform canvas={xshift=1.5pt}] (m-1-10) --  (m-2-10);

\draw[transform canvas={xshift=-1.5pt}] (m-1-11) -- (m-2-11);
\draw[transform canvas={xshift=1.5pt}] (m-1-11) --  (m-2-11);

\draw[transform canvas={xshift=-1.5pt}] (m-1-12) -- (m-2-12);
\draw[transform canvas={xshift=1.5pt}] (m-1-12) --  (m-2-12);

\draw[transform canvas={xshift=-1.5pt}] (m-1-13) -- (m-2-13);
\draw[transform canvas={xshift=1.5pt}] (m-1-13) --  (m-2-13);

\draw[transform canvas={xshift=-1.5pt}] (m-1-14) -- (m-2-14);
\draw[transform canvas={xshift=1.5pt}] (m-1-14) --  (m-2-14);

%================
%dotted boxes
%================
%
\node (A) [xshift=2em,left of=m-1-2, yshift=-1.4em,above of=m-1-2] {};
\node (B) [xshift=3em,left of=m-1-2, yshift=-1.4em,above of=m-1-12] {};
\node (A') [yshift=.8em,below of=A] {};
\node (B') [yshift=.8em,below of=B] {};

\node (C) [xshift=2em,left of=m-2-2, yshift=-2.1em,above of=m-2-2] {};
\node (D) [xshift=3.4em,left of=m-2-8, yshift=-2.1em,above of=m-2-8] {};
\node (C') [yshift=.75em,below of=C] {};
\node (D') [yshift=.75em,below of=D] {};
\draw[dotted, line width=.7] (A.center) -- (B.center) -- (B'.center) -- (A'.center) --(A.center);
\draw[dotted, line width=.7] (C.center) -- (D.center) -- (D'.center) -- (C'.center) --(C.center)
;
\end{tikzpicture}
$};
\end{tikzpicture}
\end{equation}
%=======
where we have indicated in the dotted boxes one copy of each homotopy band.
Reading the diagram `upside down' gives an example of a graph map $\B_{\tau,1} \to \B_{\sigma, 1}$.
In both cases, the overlap $\rho = d\inv{c} \, \inv{a} b (d \inv{c})^2  \inv{a} b d \inv{c}$ is longer than both homotopy bands.
\end{example}

\subsection{Infinite overlaps}

In this section, we observe that we do not need to consider when (quasi-)graph maps are determined by an infinite overlap in our analysis. The case when one of $\sigma$ and $\tau$ is a (possibly infinite) homotopy string and the other is a homotopy band can be eliminated immediately for combinatorial reasons; see \cite[\S 1.4.1 \& \S 1.4.4]{CPS1}.

If both $\sigma$ and $\tau$ are homotopy bands then the consideration of (quasi-)graph maps determined by an infinite overlap is significantly restricted by the following lemma. The statement and proof in \cite{BCS-addendum} is formulated in terms of the combinatorics of (classical/module-theoretic) strings and bands, but the argument can be modified in a straightforward manner to the combinatorics of homotopy strings and homotopy bands.

\begin{lemma}[{\cite[Lem. 1.2]{BCS-addendum}}] \label{lem:infinite}
Let $\Lambda$ be a gentle algebra. Suppose $(\sigma,\lambda)$ and $(\tau,\mu)$ are homotopy bands. If $\f \colon \B_{\sigma,\lambda} \to \B_{\tau,\mu}$ is a graph map or $\phi \colon \B_{\sigma,\lambda} \rightsquigarrow \Sigma^{-1} \B_{\tau,\mu}$ is a quasi-graph map determined by an infinite overlap $\rho$, then, up to suitable rotation and inversion, $\sigma = \tau$. 
\end{lemma}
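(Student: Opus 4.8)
The plan is to reduce the statement to an elementary fact from combinatorics on words. Recall from Section~\ref{sec:finite} that both a graph map and a quasi-graph map are determined by a maximal common homotopy substring $\rho$, and recall from Section~\ref{sec:unfolded} that such an overlap is a homotopy substring of the bi-infinite word ${}^\infty \sigma^\infty$ and, simultaneously, of ${}^\infty \tau^\infty$. Writing $p \coloneqq \length{\sigma}$ and $q \coloneqq \length{\tau}$, these two bi-infinite sequences of homotopy letters are periodic of period $p$ and $q$ respectively. I would first record that, up to replacing $\tau$ by its inverse $\inv{\tau}$ if necessary, one may assume $\rho$ occurs with the same orientation in both words; this is precisely where the ``inversion'' in the statement enters, and it treats the graph map and quasi-graph map cases uniformly. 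An infinite overlap then means that $\rho$ is infinite in at least one direction, so it contains a one-sided infinite ray.

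Next I would observe that this ray, being a factor of ${}^\infty \sigma^\infty$, inherits the period $p$, and being a factor of ${}^\infty \tau^\infty$, inherits the period $q$. A one-sided infinite word admitting two periods $p$ and $q$ also admits their greatest common divisor $g \coloneqq \gcd(p,q)$ as a period: for any index $i$ one solves $Lp - Mq = g$ with $L, M > 0$ (Bézout, then shift the parameter into the positive range) and reads off $\rho(i) = \rho(i+Lp) = \rho(i+g+Mq) = \rho(i+g)$, the middle equality being $i + Lp = i + g + Mq$ and the two outer ones repeated applications of the two periods in the direction of increasing index. This is the one-sided analogue of the Fine--Wilf theorem and is the combinatorial heart of the argument.

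Now I would invoke primitivity: a homotopy band is, by definition, not a nontrivial power $\eta^{k}$ ($k \geq 2$) of a shorter word $\eta$, and the same holds for each of its rotations. The length-$p$ window $\rho(0)\cdots\rho(p-1)$ of the ray is a rotation of $\sigma$ on the one hand, and is $g$-periodic on the other; since $g \mid p$ it equals $\bigl(\rho(0)\cdots\rho(g-1)\bigr)^{p/g}$, so primitivity of $\sigma$ forces $g = p$, i.e.\ $p \mid q$. The symmetric argument applied to a length-$q$ window together with primitivity of $\tau$ gives $q \mid p$, whence $p = q = g$. Finally, this common length-$p$ window is simultaneously a rotation of $\sigma$ (read inside ${}^\infty \sigma^\infty$) and a rotation of $\tau$ (read inside ${}^\infty \tau^\infty$), so $\sigma$ and $\tau$ have a common rotation; combined with the reduction of the first step this yields $\sigma = \tau$ up to rotation and inversion.

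The two-periods argument above is routine; the point requiring care is the bookkeeping of the first step, namely verifying that an infinite overlap genuinely presents $\rho$ as a common factor of the two unfolded words with a consistent orientation (once $\inv{\tau}$ is permitted), and that it is the full homotopy-letter data that is being matched rather than merely the underlying arrows, so that primitivity of $\sigma$ and $\tau$ \emph{as homotopy bands} may be applied. I expect this translation between the geometry of the unfolded diagram and the word combinatorics to be the main, if modest, obstacle; everything downstream is the classical periodicity argument.
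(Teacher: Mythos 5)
Your proposal is correct, but there is nothing in the paper to measure it against: the paper does not prove Lemma~\ref{lem:infinite} at all. It cites \cite[Lem.~1.2]{BCS-addendum} (a reference in preparation at the time) and remarks only that the argument there, formulated for module-theoretic strings and bands, can be modified in a straightforward manner to homotopy strings and bands. Your write-up in effect supplies that modification as a self-contained argument, and its three ingredients are all sound: (i) after replacing $\tau$ by $\inv{\tau}$ if necessary, an infinite overlap yields a one-sided infinite ray that is a common factor of the homotopy-letter sequences ${}^\infty\sigma^\infty$ and ${}^\infty\tau^\infty$ --- this is exactly how (quasi-)graph maps are set up in \cite{ALP,CPS1}, with homotopy letters (including their direct/inverse status) matched across identity vertical maps in the unfolded diagrams, and only the existence of the common substring, not the differing endpoint conditions of graph versus quasi-graph maps, is used; (ii) the two-period argument is valid in the one-sided infinite setting precisely because your choice $Lp - Mq = g$ with $L, M > 0$ keeps all indices nonnegative, so no appeal to the finite Fine--Wilf length bound is needed; (iii) primitivity is indeed part of the definition of a homotopy band, and it passes to rotations (a rotation of $u^k$ is the $k$-th power of a rotation of $u$), so the $g$-periodic windows force $g = \length{\sigma} = \length{\tau}$, whence $\sigma$ and $\tau$ share a common rotation and the initial inversion step gives the conclusion. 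This is, in spirit, the same periodicity-plus-primitivity argument that the cited module-theoretic proof rests on; what your version buys is independence from the unpublished reference, at no cost beyond the bookkeeping you already flag.
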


\begin{corollary} \label{cor:infinite}
Let $\Lambda$ be a gentle algebra. Suppose $(\sigma,\lambda)$ and $(\tau,\mu)$ are homotopy bands. 
\begin{enumerate}
\item If $\f \colon \B_{\sigma,\lambda} \to \B_{\tau,\mu}$ is a graph map determined by an infinite overlap, then $\f$ is an isomorphism. In particular, $\lambda = \mu$.
\item If $\phi \colon \B_{\sigma,\lambda} \rightsquigarrow \Sigma^{-1} \B_{\tau,\mu}$ is a quasi-graph map determined by an infinite overlap, then $\B_{\sigma,\lambda} \cong \Sigma^{-1}\B_{\tau,\mu}$. In particular,  $\lambda = \mu$.
\end{enumerate}
\end{corollary}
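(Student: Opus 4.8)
The plan is to deduce the corollary from Lemma~\ref{lem:infinite} by combining the conclusion $\sigma = \tau$ (up to rotation and inversion) with a careful reading of what an infinite overlap actually entails in the unfolded diagram. I would treat the two statements in parallel, since the graph-map and quasi-graph-map cases differ only in the endpoint conditions, and the hard work is the same in both.

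First I would invoke Lemma~\ref{lem:infinite} to replace $\tau$ by $\sigma$ (after a suitable rotation and inversion, which changes neither the complex $\B_{\tau,\mu}$ up to isomorphism nor the scalar in the relevant way). Next I would unpack what ``determined by an infinite overlap'' means via the unfolded diagram of Section~\ref{sec:unfolded}: the overlap $\rho$ is an infinite homotopy substring of ${}^\infty\sigma^\infty$, so the (quasi-)graph map identifies a common infinite tail of the two unfolded diagrams of $\B_{\sigma,\lambda}$ and $\B_{\tau,\mu}$. Because both diagrams are periodic with the \emph{same} period word $\sigma = \tau$, an infinite overlap forces the two periodic diagrams to agree on an infinite set of homotopy letters; periodicity then propagates this agreement to all of ${}^\infty\sigma^\infty$, so the overlap is in fact the whole unfolded diagram. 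I would then record that this forces the endpoint conditions for a graph map (respectively quasi-graph map) to be vacuously satisfied, and that the associated morphism is, componentwise, the identity on each repeated piece of the unfolded diagram.

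The key step is the comparison of scalars. In the unfolded diagram the scalar $\lambda$ sits on the chosen direct homotopy letter $\sigma_1$ for $\B_{\sigma,\lambda}$, and $\mu$ sits on the corresponding direct letter for $\B_{\tau,\mu}$. Since the infinite overlap aligns the two diagrams letter-for-letter, the maps along the two rows must commute with the identification; comparing the labels $\lambda\sigma_1$ and $\mu\sigma_1$ on the matched direct letter yields $\lambda = \mu$. For part~(1) this says $\f$ restricts to an isomorphism on each component and hence is an isomorphism of complexes; for part~(2), the analogous alignment, together with the shift by $\Sigma^{-1}$ built into the quasi-graph map, gives $\B_{\sigma,\lambda} \cong \Sigma^{-1}\B_{\tau,\mu}$, and again $\lambda = \mu$.

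The main obstacle I anticipate is the bookkeeping in the scalar comparison: one must be careful that ``up to suitable rotation and inversion'' in Lemma~\ref{lem:infinite} does not silently move the scalar onto a different homotopy letter or invert it in a way that could produce $\mu = \lambda^{-1}$ rather than $\mu = \lambda$. This is exactly where the convention from Section~\ref{sec:unfolded} that the scalar is always placed on a direct homotopy letter does the work, and I would lean on it to pin down the position of $\lambda$ and $\mu$ unambiguously before matching them. A secondary point to verify carefully is that an infinite overlap genuinely exhausts the whole unfolded diagram rather than merely an infinite tail on one side; periodicity of the band handles this, but it should be stated explicitly rather than assumed.
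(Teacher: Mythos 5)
Your part (1) is essentially correct and is the intended argument: by Lemma~\ref{lem:infinite} one reduces to $\sigma = \tau$ (up to rotation and inversion), periodicity of both rows upgrades the infinite overlap to an identification of the entire unfolded diagrams, and since a graph map is an honest chain map with identity components on the overlap, commutativity of the square carrying the scalars forces $\lambda = \mu$, whence $\f$ is an isomorphism. Your worry about inversion is legitimate, but the convention of placing scalars on direct letters does not by itself resolve it: if the overlap matches $\sigma$ with $\tau$ read backwards, the scalar of $\B_{\tau,\mu}$ gets inverted when the word is inverted, and the equality $\lambda=\mu$ must be read after that normalisation (equivalently, the invariant statement is $\B_{\sigma,\lambda}\cong\B_{\tau,\mu}$).

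The genuine gap is in part (2). A quasi-graph map is \emph{not} a chain map: as the paper recalls, $\phi$ is a combinatorial overlap datum, and what it produces are representatives $\f \colon \B_{\sigma,\lambda} \to \B_{\tau,\mu}$ of a homotopy set, which are single/double maps supported at the \emph{ends} of the overlap --- not an identity-component morphism along the overlap. Consequently ``the maps along the two rows must commute with the identification'' is not available to you: the defining feature of a quasi-graph map is precisely that the identity components along the overlap fail to assemble into a chain map, so no commutativity constraint compares $\lambda$ with $\mu$. Indeed, for $\lambda \neq \mu$ the same word-level infinite overlap still exists; what fails is that it determines only the \emph{zero} homotopy class, hence is not a quasi-graph map in the canonical basis at all. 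Establishing this is the real content of part (2), and your proposal does not address it. The route the paper indicates (in the paragraph following the corollary) is Serre duality: the quasi-graph map given by an infinite overlap is the Serre dual of the graph map (the identity) given by the same overlap read in the other direction, so its existence forces $\B_{\tau,\mu} \cong \Sigma\B_{\sigma,\lambda}$, i.e.\ $\B_{\sigma,\lambda}\cong\Sigma^{-1}\B_{\tau,\mu}$, and then $\lambda = \mu$ follows from part (1); the representatives of the homotopy set are then the almost-vanishing maps of the Auslander--Reiten triangle, cf.\ Theorem~\ref{thm:quasi-band-to-band}(1). Alternatively one can check directly that when $\lambda\neq\mu$ every candidate representative is null-homotopic, but some such argument must be supplied.
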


Corollary~\ref{cor:infinite} means, therefore, that whenever we have a (quasi-)graph map between band complexes determined by an infinite overlap, either the mapping cone is zero (in the case of a graph map), or else the mapping cone is the middle term of the Auslander--Reiten triangle (in the case of a quasi-graph map). In the second case, this can be seen by observing that under Serre duality the identity map on a band complex is taken to the corresponding quasi-graph map given by the same overlap in the other direction; see \cite[Ex. 5.9]{ALP} and \cite[Rem. 1.8]{CPS2}.

%========================================================================================
% SECTION
\section{Mapping cones of graph maps involving a band complex} \label{sec:graph-maps}
%=======================================================================================

The key technical tool in the arguments that follow is \cite[Lem. 2.4]{CPS1}. We need a slightly more general version in this note, which for convenience we state here. The proof consists of a straightforward modification of the proof given in \cite{CPS1}. 

\begin{lemma}[{\cite[Lem 2.4]{CPS1}}] \label{lem:homotopy}
Let $\Lambda$ be a finite dimensional $\kk$-algebra, let $\lambda \in \kk^*$ and suppose $\P$ is a complex of the form
\[
\xymatrix{
\P \colon & \cdots \ar[r] & P^{n-1} \ar[r]^-{\begin{pmat} a_1 & a_2 \end{pmat}} & P^n \oplus Q \ar[r]^-{\begin{pmat} b_1 & b _2 \\ b_3 & \lambda \cdot \mathds{1} \end{pmat}} &  P^{n+1} \oplus Q \ar[r]^-{\begin{pmat} c_1 \\ c_2 \end{pmat}} &  P^{n+2} \ar[r] & \cdots,
}
\]
where $\mathds{1}\colon Q \to Q$ denotes the identity morphism on $Q$.
Then $\P \cong (P')^\bullet \oplus \Q$, where $(P')^\bullet$ and $\Q$ are the following complexes,
\begin{align*}
(P')^\bullet \colon & \xymatrix{
\cdots \ar[r] & P^{n-1} \ar[r]^-{a_1} & P^n \ar[rr]^-{b_1 - \lambda^{-1} b_2 b_3} & & P^{n+1} \ar[r]^-{c_1} & P^{n+2} \ar[r] & \cdots
}; \\
\Q \colon & \xymatrix{
\cdots \ar[r] & 0 \ar[r] & Q \ar[r]^-{\lambda \cdot \mathds{1}} & Q \ar[r] & 0 \ar[r] & \cdots
},
\end{align*} 
where $(P')^i = P^i$ and $Q^i = 0$ whenever $i \notin \{n, n+1\}$. In particular, in the homotopy category $\P \cong (P')^\bullet$. 
\end{lemma}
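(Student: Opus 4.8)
The statement is a Gaussian-elimination principle: it cancels a two-term contractible block $Q \to Q$ sitting inside $\P$, glued to the rest via an invertible corner. The plan is to exhibit an explicit isomorphism of complexes $\Phi \colon \P \to (P')^\bullet \oplus \Q$ that is the identity in every degree except $n$ and $n+1$. The only differential that needs altering is $d^n = \begin{pmat} b_1 & b_2 \\ b_3 & \lambda\cdot\mathds{1}\end{pmat}$, and the whole mechanism rests on the corner block $\lambda\cdot\mathds{1}$ being invertible; this is the single place where the hypothesis $\lambda \in \kk^*$ is used, and tracking the factor $\lambda^{-1}$ is the only genuine difference from \cite[Lem. 2.4]{CPS1}. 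Concretely I would take the degree-$n$ and degree-$(n+1)$ components of $\Phi$ to be
\[
\phi^n = \begin{pmat} \mathds{1} & 0 \\ \lambda^{-1} b_3 & \mathds{1}\end{pmat}, \qquad \phi^{n+1} = \begin{pmat} \mathds{1} & -\lambda^{-1} b_2 \\ 0 & \mathds{1}\end{pmat},
\]
with $\phi^i = \mathds{1}$ for $i \notin \{n, n+1\}$, and define the transported differentials by $d^i_{\mathrm{new}} = \phi^{i+1} \, d^i \, (\phi^i)^{-1}$.

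The first computation is a one-liner: conjugating $d^n$ by these maps performs exactly the column operation clearing $b_3$ against $\lambda\cdot\mathds{1}$ and the row operation clearing $b_2$ against $\lambda\cdot\mathds{1}$, leaving
\[
\phi^{n+1} \, d^n \, (\phi^n)^{-1} = \begin{pmat} b_1 - \lambda^{-1} b_2 b_3 & 0 \\ 0 & \lambda\cdot\mathds{1} \end{pmat}.
\]
This already splits the middle differential into the $(P')^\bullet$-part $b_1 - \lambda^{-1}b_2 b_3$ and the $\Q$-part $\lambda\cdot\mathds{1}$.

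The step I expect to be the crux — and the reason the hypothesis is that $\P$ be a genuine complex rather than a mere diagram — is checking that the same change of basis also decouples the two \emph{adjacent} differentials. After transport, $d^{n-1}_{\mathrm{new}} = \phi^n d^{n-1}$ acquires a $Q$-component $\lambda^{-1}b_3 a_1 + a_2$, and $d^{n+1}_{\mathrm{new}} = d^{n+1}(\phi^{n+1})^{-1}$ acquires a $Q$-component $\lambda^{-1} c_1 b_2 + c_2$; a priori these correction terms could reintroduce coupling between $(P')^\bullet$ and $\Q$. The observation that rescues the argument is that the cocycle relations $d^n d^{n-1} = 0$ and $d^{n+1}d^n = 0$, read off in their $Q$-summands, give precisely $b_3 a_1 + \lambda a_2 = 0$ and $c_1 b_2 + \lambda c_2 = 0$. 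These force both residual $Q$-components to vanish, so the new complex is \emph{literally} $(P')^\bullet \oplus \Q$ in every degree, with $(P')^i = P^i$ and $Q^i = 0$ for $i \notin \{n, n+1\}$ as claimed.

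Finally, since $\lambda \in \kk^*$ the complex $\Q \colon Q \xrightarrow{\lambda\cdot\mathds{1}} Q$ concentrated in degrees $n, n+1$ is contractible, so passing to the homotopy category yields $\P \cong (P')^\bullet$, which is the last assertion. The main obstacle, to reiterate, is not the block-diagonalisation of $d^n$ itself but verifying the compatibility with $d^{n-1}$ and $d^{n+1}$; everything there is dictated by $d^2 = 0$, and the rest is the routine bookkeeping of $\lambda^{-1}$ promised by the phrase ``straightforward modification'' in the text.
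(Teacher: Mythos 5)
Your proof is correct, and it is essentially the argument the paper has in mind: the paper gives no proof beyond citing \cite[Lem. 2.4]{CPS1} as a ``straightforward modification'', and that proof is exactly this Gaussian-elimination/change-of-basis computation, with the two $d^2=0$ relations in the $Q$-components killing the correction terms in $d^{n-1}$ and $d^{n+1}$ and with $\lambda\in\kk^*$ making $\Q$ contractible. The only cosmetic caveat is that the paper's matrices follow the right-action convention (so $b_2\colon P^n\to Q$ and $b_3\colon Q\to P^{n+1}$, opposite to your reading), but since this merely transposes all matrices in your argument and yields the same formula $b_1-\lambda^{-1}b_2b_3$, nothing of substance changes.
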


The following theorem extends \cite[Prop. 2.11]{CPS1} to the case of a graph map from a band complex to a string complex in which the overlap determining the graph map is longer than the homotopy band. We see that the effect on the mapping cone of an overlap longer than a homotopy band is to `remove one copy of the homotopy band' from the homotopy string to obtain the homotopy string of the mapping cone. To enable effective comparison with the case that the overlap is at most the length of the homotopy band, we also restate \cite[Prop. 2.11]{CPS1}.  

\begin{theorem} \label{thm:graph-band-to-string}
Let $\Lambda$ be a gentle algebra,  $\sigma$ be a homotopy band  and $\tau$ be a homotopy string. For $\lambda \in \kk^*$,  let $\f \colon \B_{\sigma,\lambda} \to \P_\tau$ be a graph map determined by a (possibly trivial) maximal common homotopy string $\rho = \rho_k \cdots \rho_1$.
\begin{enumerate}
\item \label{graph:original} {\rm (\cite[Prop. 2.11]{CPS1})} Suppose $\rho$ is a proper subword of $\sigma$. Then, after suitable rotation of $\sigma$, there is a decomposition $\sigma = \rho\alpha$ and a decomposition $\tau = \delta \tau_L \rho \tau_R \gamma$. Then $\M_{\f} \cong \P_c$, where $c = \delta \tau_L \inv{\alpha} \tau_R \gamma$. This is indicated in the unfolded diagram below, in which $\alpha = \alpha_\ell \cdots \alpha_1$. Note that, if $f_L \neq \emptyset$ then $\tau_L \inv{\alpha_1} = \inv{f_L}$; similarly, if $f_R \neq \emptyset$ then $\inv{\alpha_\ell} \tau_R = f_R$.
\[
\xymatrix@!R=4px{
\B_\sigma \colon & \xydot \arr^-{\alpha_2} & \xydot \arr^-{\alpha_1} \ar[d]_-{f_L} & \xydot \arr^-{\rho_k} \ar@{=}[d] & \cdots \arr^-{\rho_1}  & \xydot \arr^-{\alpha_\ell} \ar@{=}[d] & \xydot \arr^-{\alpha_{\ell - 1}} \ar[d]^-{f_R} & \xydot \ar@{..}[r] & \xydot \arr^-{\alpha_1} & \xydot \arr^-{\rho_k} & \xydot  \\
\P_\tau \colon      & \ar@{~}[r]_-{\delta} & \xydot \arr_-{\tau_L}                                                & \xydot \arr_-{\rho_k}                  &  \cdots \arr_-{\rho_1}                  & \xydot \arr_-{\tau_R}                                                    & \xydot \ar@{~}[r]_-{\gamma}                 &  & & &
}
\]
\item \label{graph:longer} Suppose there is an integer $\ell \geq 1$ such that $\rho$ is a proper subword of $\sigma^{\ell+1}$ and $\sigma^\ell$ is a (not necessarily proper) subword of $\rho$. Then, after suitable rotation of $\sigma$, there is a decomposition $\sigma = \beta \alpha$, with $\beta$ or $\alpha$ possibly trivial, such that 
\[
\tau = \delta \sigma^\ell \beta \gamma \text{ for some } \ell \geq 1 \text{ and some homotopy strings } \gamma \text{ and } \delta. 
\]
In this case, $\M_{\f} = \P_c$, where $c = \delta \sigma^{\ell - 1} \beta \gamma$.
\end{enumerate}
\end{theorem}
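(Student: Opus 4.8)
The plan is to follow the strategy used for part~(\ref{graph:original}) in \cite{CPS1}, namely to write down the mapping cone $\M_{\f}$ explicitly and then to simplify it by repeated application of Lemma~\ref{lem:homotopy}, while taking careful account of the fact that the overlap $\rho$ now wraps around the unfolded diagram of $\sigma$ a total of $\ell$ times. First I would record the cone as the twisted complex $\Sigma\B_{\sigma,\lambda} \oplus \P_\tau$ whose differential is assembled from the differentials of $\B_{\sigma,\lambda}$ and $\P_\tau$ together with the components of the graph map $\f$. Using the unfolded diagram of $\sigma$ and the decomposition $\tau = \delta\sigma^\ell\beta\gamma$, I would align the single period of modules comprising $\Sigma\B_{\sigma,\lambda}$ with the overlap region $\sigma^\ell\beta$ of $\tau$; along this region the components of $\f$ are the identity morphisms dictated by the overlap (up to the scalars forced by chain-map compatibility with the two differentials), while off the overlap they contribute only the endpoint maps.

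The essential new phenomenon, which is invisible in part~(\ref{graph:original}) and is precisely the source of the original oversight, is that in the folded band complex $\B_{\sigma,\lambda}$ each module of the period occurs exactly once, whereas the overlap presents it $\ell$ times along $\sigma^\ell$; moreover the scalar $\lambda$ sits on a single homotopy letter of the folded band, so the scalar $\lambda$ enters the cone through a single block of the form $\lambda\cdot\mathds{1}$ appearing in Lemma~\ref{lem:homotopy}, the remaining attachment squares being ordinary identity squares. I would therefore cancel the entire period of $\Sigma\B_{\sigma,\lambda}$ against exactly one of the $\ell$ copies of $\sigma$ inside $\tau$, applying Lemma~\ref{lem:homotopy} once for each of the $\length{\sigma}$ modules of the period (once with the genuine scalar $\lambda$, and $\length{\sigma}-1$ times with scalar $1$). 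Each application removes one band module together with one module of the chosen copy of $\sigma$, and the correction term $b_1 - \lambda^{-1} b_2 b_3$ reconnects the neighbouring modules; because each band module was attached by the identity to all $\ell$ copies, these correction terms splice the surviving $\ell-1$ copies of $\sigma$, together with $\beta$, into a single homotopy string running between $\delta$ and $\gamma$.

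Collecting the outcome, one full copy of $\sigma$ has been removed from $\tau$, and the resulting complex is the string complex $\P_c$ with $c = \delta\sigma^{\ell-1}\beta\gamma$, as claimed; in particular, since $c$ is a homotopy string rather than a homotopy band, the scalar introduced through the $\lambda^{-1}$ corrections is absorbed into a change of basis and leaves no trace, so no nontrivial power of a band can arise here, in contrast to the band-to-band situation treated later. I expect the main obstacle to be the bookkeeping of this iterated cancellation: one must verify that cancelling the period against a \emph{single} copy, rather than against the whole overlap as the naive extension of part~(\ref{graph:original}) would suggest, is forced by the position of the $\lambda$-scaled letter, and that the correction terms reassemble the remaining letters into exactly $\sigma^{\ell-1}\beta$ with the correct orientations and signs. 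Concretely this amounts to tracking the homotopy letters and scalars through $\length{\sigma}$ applications of Lemma~\ref{lem:homotopy} and matching the result against the unfolded diagram displayed in the statement.
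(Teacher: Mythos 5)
Your overall strategy coincides with the paper's: write the cone, iterate Lemma~\ref{lem:homotopy} to cancel the single (folded) period of $\Sigma\B_{\sigma,\lambda}$ against one of the $\ell$ copies of $\sigma$ inside $\tau$, and finish with a change of basis; this is Steps 1 and 2 of the paper's proof. (Two small inaccuracies of emphasis: which copy one cancels against is a free choice — the paper takes the leftmost complete copy — and is not ``forced by the position of the $\lambda$-scaled letter''; the scalar is simply carried along by the generalized form of Lemma~\ref{lem:homotopy} with $\lambda\cdot\mathds{1}$ in place of $\mathds{1}$, which is exactly why the note restates that lemma.)

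The genuine gap is your claim that the correction terms $b_1-\lambda^{-1}b_2b_3$ ``splice the surviving $\ell-1$ copies of $\sigma$, together with $\beta$, into a single homotopy string,'' leaving only a scalar to be absorbed. That is not what the iterated cancellation produces. Precisely because each band module is attached by identity-type components to \emph{all} $\ell$ copies of $\sigma$ in $\tau$ (and possibly by a path-valued component at the right end of the overlap), each application of Lemma~\ref{lem:homotopy} creates correction components of the form $-\beta_i f$ from the neighbours of the cancelled module of $\tau$ to \emph{every other} occurrence of the cancelled band module in the support of $\f$, not only to the adjacent one. After all $\length{\sigma}$ cancellations one is therefore left with a complex whose unfolded diagram contains, besides the letters of $\delta\sigma^{\ell-1}\beta\gamma$, whole families of long-range composite components (for instance, from the endpoint of $\delta$ to the corresponding module in each of the surviving copies of $\sigma$). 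This intermediate complex is \emph{not} a string complex on the nose, and the issue is not a scalar: one must construct an explicit isomorphism onto $\P_c$ whose matrices have diagonal entries $\pm 1$ and nontrivial off-diagonal, path-valued entries designed to kill exactly those long-range components, together with the commutativity checks this requires (e.g. that certain compositions vanish by gentleness). This is Step 3 of the paper's proof and its Figure~1, and it is the substantive content your plan omits; the ``bookkeeping'' you defer does not close up by itself at the end of Step 2.
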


\begin{remark}
Note that in Theorem~\ref{thm:graph-band-to-string}\eqref{graph:longer}, $\gamma$, $\sigma^\ell \beta$ and $\delta$ need not be homotopy substrings in the strictest sense of \cite[Def 1.3(5)]{CPS1}. It is possible when taking a homotopy letter partition (see \cite[Def 1.3(3)]{CPS1}) that the last homotopy letter of $\gamma$ merges with the first homotopy letter of $\sigma^\ell \beta$ and the last homotopy letter of $\sigma^\ell \beta$ merges with the first homotopy letter of $\delta$.
\end{remark}

\begin{proof}[Proof of Theorem~\ref{thm:graph-band-to-string}] 
Statement \eqref{graph:original} was proved in \cite[Prop. 2.11]{CPS1}, therefore, we only prove \eqref{graph:longer}. 
The technique is to use Lemma~\ref{lem:homotopy} repeatedly to remove the identity morphisms occurring in the mapping cone. 
The proof is somewhat technical so we proceed in a sequence of steps.

\smallskip
\noindent \textbf{Step 1:} \textit{Translate Lemma~\ref{lem:homotopy} into the language of unfolded diagrams.}
\smallskip

This step plays the role of \cite[Cor. 2.7]{CPS1} in the proof of \cite[Thm. 2.2]{CPS1}.
Consider the following situation in $\M_{\f}$,
\[
\xymatrix{
\M_{\f} \colon & \cdots \ar[r] & P^{n-1} \ar[r]^-{\begin{pmat} a_1 & a_2 \end{pmat}} & P^n \oplus P_x \ar[r]^-{\begin{pmat} b_1 & b _2 \\ b_3 & 1 \end{pmat}} &  P^{n+1} \oplus P_x \ar[r]^-{\begin{pmat} c_1 \\ c_2 \end{pmat}} &  P^{n+2} \ar[r] & \cdots.
}
\] 
The unfolded diagram has the following form,
\[
\xymatrix@!R=4px{
 x_1 \arr^-{\alpha_1}  & x \ar@{=}[d]  \arr^-{\alpha_2}         & x_2       \ar@{..}[r]  & x_1 \arr^-{\alpha_1}      & x \ar[d]^-{f} \arr^-{\alpha_2} & x_2                        \\
 y_1 \arr_-{\beta_1} & x \arr_-{\beta_2} & y_2 \ar@{..}[r] & z_1 \arr_-{\gamma_1} & x' \arr_-{\gamma_2}          & z_2
} \, ,
\]
where the differentials $d^{n-1}_{\M_{\f}}$, $d^n_{\M_{\f}}$ and $d^{n+1}_{\M_{\f}}$, when they exist, have the following components:
\begin{itemize}
\item the identity map $P_x \to P_x$ on the left hand side corresponds to the component $1$ in the differential $d^n_{\M_{\f}}$;
\item $\alpha_1$ and $\alpha_2$ are either the components of $a_2$ or $b_3$, depending on their orientations;
\item $\beta_1$ and $\beta_2$ are either the components of $b_2$ or $c_2$, depending on their orientations;
\item $\gamma_1$ and $\gamma_2$ are either the components of $b_1$ or $c_1$, depending on their orientations; and
\item $f$ is a component of $b_3$, which may be an identity map, a zero map, or a map induced by a nontrivial path in $(Q,I)$.
\end{itemize}
We note that this is a schematic only. In particular, the right-hand side of the diagram represents many possible repetitions of $P_x$ in the unfolded diagram of $\f$ depending on how many times $P_x$ occurs in the support of $\f$. However, the schematic allows us to describe explicitly the effect of Lemma~\ref{lem:homotopy} on the unfolded diagram of $\M_{\f}$. Namely, we obtain the following new components for $1 \leq i, j \leq 2$:
\begin{itemize}
\item  $-\beta_i \alpha_j \colon P_{y_i} \to P_{x_i}$ is a component of $-b_2 b_3$ whenever $\beta_i$ is a component of $b_2$ and $\alpha_j$ is a component of $b_3$; this is zero otherwise, and
\item $-\beta_i f \colon P_{y_i} \to P_{x'}$ whenever $\beta_i$ is a component of $b_2$; this is zero otherwise.
\end{itemize}
The unfolded diagram of the complex $(\M_{\f})^\prime$ thus becomes the following:
\[
\xymatrix@!R=4px{
 x_1                                &                          & x_2       \ar@{..}[r]  &  x_1                                         &  &   x_2                     \\
 y_1 \ar@{-->}@/^3pc/[urrr]^>{-\beta_1\alpha_1}  \ar@{-->}@/^6pc/[urrrrr]^-{-\beta_1\alpha_2} \ar@{-->}@/_3pc/[rrrr]_-{-\beta_1 f} &   & y_2 \ar@{..}[r] \ar@{-->}[ur]^-{-\beta_2 \alpha_1} \ar@{-->}[urrr]^-{-\beta_2 \alpha_2} \ar@{-->}@/_3pc/[rr]_-{-\beta_2 f} & z_1 \arr_-{\gamma_1} & x' \arr_-{\gamma_2}          & z_2
} \, ,
\]

\smallskip
\noindent \textbf{Step 2.} {\it Remove one copy of $\sigma$ from $\tau$.}
\smallskip

In this step, we iteratively apply Step 1 to remove one copy of $\sigma$ from $\tau$. In this process we will construct either one or two new families of maps in the unfolded diagram, which will be connected to either end of $\delta$ and $\sigma^{\ell-1} \beta \gamma$. Consider the following schematic of the starting unfolded diagram, which shows the leftmost complete overlap with the band together with each occurrence of $x_{n-1}$ in the support of $\f$,
\[
\scalebox{0.85}{
\xymatrix@!R=4px{
x_1 \arr^-{-\sigma_1} & x_0 \arr^-{-\sigma_n} \ar[d]_-{f_L} & x_{n-1} \arr^-{-\sigma_{n-1}} \ar@{=}[d] & x_{n-2} \ar@{..}[r] & x_2 \arr^-{-\sigma_2}  & x_1 \arr^-{-\sigma_1} \ar@{=}[d] & x_0 \arr^-{-\sigma_n} \ar@{=}[d] & x_{n-1} \ar[d]^-{g_k} \ar@{..}[r] & x_0 \arr^-{-\sigma_n} & x_{n-1} \ar[d]^-{g_i} \arr^-{-\sigma_{n-1}} & x_{n-2} \ar@{..}[r] & \\
\ar@{~}[r]_-{\delta} & z \arr_-{\tau_L}                   & x_{n-1} \arr_-{\sigma_{n-1}}           & x_{n-2} \ar@{..}[r] & x_2 \arr_-{\sigma_2}  & x_1 \arr_-{\sigma_1}            & x_0 \arr_-{\omega_k}             & y_k \ar@{..}[r]               & x_{i,0} \arr_-{\omega_i}       & y_i \arr                                 & x_{i,n-2} \ar@{..}[r] &
},}
\] 
where the number of times $x_{n-1}$ occurs in the support of $\f$ is $k+1$, and if $k > 1$ then $y_i = x_{n-1}$, $x_{i,0} = x_0$, $x_{i,n-2} = x_{n-2}$, $g_i = \mathrm{id}_{x_{n-1}}$ and $\omega_i = \sigma_n$ for all $1 <  i \leq k$.
 
We split the analysis up into two cases: $\sigma_n$ is inverse and $\sigma_n$ is direct.

\smallskip
\noindent \textit{Case:} $\sigma_n$ is inverse.
\smallskip

In this case $\tau_L$ may be empty, direct or inverse. In the case that $\tau_L$ is inverse, we must have that $f_L$ is nontrivial. In the other cases, $f_L = \emptyset$.
On the right-hand side, we must have that $\omega_1 \neq \emptyset$ is an inverse homotopy letter and that $g_1$ is a (possibly trivial) path. 
(The fact that $\omega_i$ is an inverse homotopy letter for $1 < i \leq k$ whenever $k > 1$ is implicit in the fact that $\omega_i = \sigma_n$.)

We first treat the case that $\tau_L$ is direct. The unfolded diagram is the following:
\[
\scalebox{0.85}{
\xymatrix@!R=4px{
x_1 \arr^-{-\sigma_1} & x_0 \ar@{<-}[r]^-{-\sigma_n}  & x_{n-1} \arr^-{-\sigma_{n-1}} \ar@{=}[d] & x_{n-2} \ar@{..}[r] & x_2 \arr^-{-\sigma_2}  & x_1 \arr^-{-\sigma_1} \ar@{=}[d] & x_0 \ar@{<-}[r]^-{-\sigma_n} \ar@{=}[d] & x_{n-1} \ar[d]^-{g_k} \ar@{..}[r] & x_0 \ar@{<-}[r]^-{-\sigma_n} & x_{n-1} \ar[d]^-{g_i} \arr^-{-\sigma_{n-1}} & x_{n-2} \ar@{..}[r] & \\
\ar@{~}[r]_-{\delta} & z \ar[r]_-{\tau_L}            & x_{n-1} \arr_-{\sigma_{n-1}}           & x_{n-2} \ar@{..}[r] & x_2 \arr_-{\sigma_2}  & x_1 \arr_-{\sigma_1}            & x_0 \ar@{<-}[r]_-{\omega_k}             & y_k \ar@{..}[r]               & x_{i,0} \ar@{<-}[r]_-{\omega_i}       & y_i \arr                                 & x_{i,n-2} \ar@{..}[r] &
}}
\] 
Application of Step 1 yields the new unfolded diagram: 
\begin{equation} \label{step2-diag2}
\scalebox{0.85}{
\xymatrix@!R=4px{
x_1 \arr^-{-\sigma_1} & x_0                 &   & x_{n-2} \ar@{..}[r] & x_2 \arr^-{-\sigma_2}  & x_1 \arr^-{-\sigma_1} \ar@{=}[d] & x_0 \ar@{=}[d]              & \ar@{..}[r]     & x_0                            &           & x_{n-2} \ar@{..}[r] & \\
\ar@{~}[r]_-{\delta} & z \ar@/_6pc/[rrrrrr]_-{-\tau_L g_k} \ar@/_9pc/[rrrrrrrr]_-{-\tau_L g_i} &    & x_{n-2} \ar@/_3pc/[rrrr]_-{-\sigma_{n-1}g_k} \ar@/_6pc/[rrrrrr]_-{-\sigma_{n-1}g_i} \ar@{..}[r] & x_2 \arr_-{\sigma_2}  & x_1 \arr_-{\sigma_1}            & x_0 \ar@{<-}[r]_-{\omega_k} & y_k \ar@{..}[r] & x_{i,0} \ar@{<-}[r]_-{\omega_i} & y_i \arr  & x_{i,n-2} \ar@{..}[r] &
}}
\end{equation}
where we have ignored all the composite morphisms incident with the top row of the unfolded diagram because they will be removed when $x_0$ and $x_{n-2}$ are removed in subsequent applications of Step 1.
Moreover, note again, that diagram \eqref{step2-diag2} is a schematic: the curved arrows represent two families of morphisms with each family consisting of one arrow for each time $x_{n-1}$ occurs in the support of $\f$. We observe two things, on the bottom row, 
\begin{itemize}
\item any arrow whose target is $x_{n-1}$ is composed with $g_i$ to give a new arrow whose target is $y_i$; and,
\item any arrow whose source is $x_{n-1}$ is removed.
\end{itemize}
Iterating Step 1, we obtain the following unfolded diagram, where all vertices on the top row have now been removed.
\begin{equation} \label{case-1}
\scalebox{1}{
\xymatrix@!R=4px{
\ar@{~}[r]_-{\delta} & z \ar@/_3pc/[rrrrrr]_-{-\tau_L g_k} \ar@/_6pc/[rrrrrrrr]_-{-\tau_L g_i} &    &   &   &  &  & y_k \ar@{..}[r] \ar@/_1pc/[r]_-{-\omega_k g_i} & x_{i,0} \ar@{<-}[r]^-{\omega_i} & y_i \arr  & x_{i,n-2} \ar@{..}[r] &
}}
\end{equation}

The case that $\tau_L = \emptyset$ is a subcase of the above in which $\delta = \emptyset$ and $-\tau_L g_i = \emptyset$ for all $1 \leq i \leq k$. 
We are therefore left with the case that $\tau_L$ is inverse. For this case, the unfolded diagram is the following.
\[
\scalebox{0.85}{
\xymatrix@!R=4px{
x_1 \arr^-{-\sigma_1} & x_0 \ar@{<-}[r]^-{-\sigma_n} \ar[d]_-{f_L} & x_{n-1} \arr^-{-\sigma_{n-1}} \ar@{=}[d] & x_{n-2} \ar@{..}[r] & x_2 \arr^-{-\sigma_2}  & x_1 \arr^-{-\sigma_1} \ar@{=}[d] & x_0 \ar@{<-}[r]^-{-\sigma_n} \ar@{=}[d] & x_{n-1} \ar[d]^-{g_k} \ar@{..}[r] & x_0 \ar@{<-}[r]^-{-\sigma_n} & x_{n-1} \ar[d]^-{g_i} \arr^-{-\sigma_{n-1}} & x_{n-2} \ar@{..}[r] & \\
\ar@{~}[r]_-{\delta} & z \ar@{<-}[r]_-{\tau_L}            & x_{n-1} \arr_-{\sigma_{n-1}}           & x_{n-2} \ar@{..}[r] & x_2 \arr_-{\sigma_2}  & x_1 \arr_-{\sigma_1}            & x_0 \ar@{<-}[r]_-{\omega_k}             & y_k \ar@{..}[r]               & x_{i,0} \ar@{<-}[r]_-{\omega_i}       & y_i \arr                                 & x_{i,n-2} \ar@{..}[r] &
}}
\]
Note that $\inv{\tau_L} = \inv{\sigma_n} f_L$ and $f_L$ is a nontrivial path in $(Q,I)$. After iterating Step 1, the resulting unfolded diagram is given below.
\begin{equation} \label{case-2}
\scalebox{1}{
\xymatrix@!R=4px{
\ar@{~}[r]_-{\delta} & z  &    &   &   &  &  & \ar[llllll]^-{-\inv{f_L}\omega_k} y_k \ar@{..}[r] \ar@/_1pc/[r]_-{-\omega_k g_i} & x_{i,0} \ar@{<-}[r]^-{\omega_i} & y_i \arr  & x_{i,n-2} \ar@{..}[r] &
}}
\end{equation}
We remark that the condition $\inv{\tau_L} = \inv{\sigma_n} f_L \neq 0$ ensures that $\inv{f_L}\omega_k \neq 0$. 

\smallskip
\noindent \textit{Case:} $\sigma_n$ is direct.
\smallskip

In this case $\tau_L \neq \emptyset$ is a direct homotopy letter and $f_L$ is a nontrivial path in $(Q,I)$. 
On the right-hand side, the case distinction is between $\omega_k$ being an empty, inverse or direct homotopy letter. However, $\omega_k$ can only be an empty or inverse homotopy letter when $k=1$, since whenever $k > 1$, we have $\omega_k = \sigma_n$, which is direct.

Suppose $\omega_k$ is direct. Then $\omega_k = \sigma_n g_k$ with $g_k$ a (possibly trivial) path. (Indeed, $g_k$ can only be nontrivial if $k =1$.) The unfolded diagram is the following.
\[
\scalebox{0.85}{
\xymatrix@!R=4px{
x_1 \arr^-{-\sigma_1} & x_0 \ar[r]^-{-\sigma_n} \ar[d]_-{f_L} & x_{n-1} \arr^-{-\sigma_{n-1}} \ar@{=}[d] & x_{n-2} \ar@{..}[r] & x_2 \arr^-{-\sigma_2}  & x_1 \arr^-{-\sigma_1} \ar@{=}[d] & x_0 \ar[r]^-{-\sigma_n} \ar@{=}[d] & x_{n-1} \ar[d]^-{g_k} \ar@{..}[r] & x_0 \ar[r]^-{-\sigma_n} & x_{n-1} \ar[d]^-{g_i} \arr^-{-\sigma_{n-1}} & x_{n-2} \ar@{..}[r] & \\
\ar@{~}[r]_-{\delta} & z \ar[r]_-{\tau_L}                   & x_{n-1} \arr_-{\sigma_{n-1}}           & x_{n-2} \ar@{..}[r] & x_2 \arr_-{\sigma_2}  & x_1 \arr_-{\sigma_1}            & x_0 \ar[r]_-{\omega_k}             & y_k \ar@{..}[r]               & x_{i,0} \ar[r]_-{\omega_i}       & y_i \arr                                 & x_{i,n-2} \ar@{..}[r] &
}}
\] 
In this case, iterated application of Step 1 yields the unfolded diagram below.
\begin{equation} \label{case-3}
\scalebox{1}{
\xymatrix@!R=4px{
\ar@{~}[r]_-{\delta} & z \ar[rrrrrr]_-{-\tau_L g_k} \ar@/_3pc/[rrrrrrrr]_-{-\tau_L g_i} &    &   &   &  &  & y_k \ar@{..}[r]  & x_{i,0} \ar[r]^-{\omega_i} & y_i \arr  & x_{i,n-2} \ar@{..}[r] &
}}
\end{equation}

If $\omega_k$ is an inverse or empty homotopy letter, then $k = 1$ and the starting unfolded diagram is given below. 
\[
\xymatrix@!R=4px{
x_1 \arr^-{-\sigma_1} & x_0 \ar[r]^-{-\sigma_n} \ar[d]_-{f_L} & x_{n-1} \arr^-{-\sigma_{n-1}} \ar@{=}[d] & x_{n-2} \ar@{..}[r] & x_2 \arr^-{-\sigma_2}  & x_1 \arr^-{-\sigma_1} \ar@{=}[d] & x_0 \ar[r]^-{-\sigma_n} \ar@{=}[d] & x_{n-1}  \ar@{..}[r] &  \\
\ar@{~}[r]_-{\delta} & z \ar[r]_-{\tau_L}                   & x_{n-1} \arr_-{\sigma_{n-1}}           & x_{n-2} \ar@{..}[r] & x_2 \arr_-{\sigma_2}  & x_1 \arr_-{\sigma_1}            & x_0 \ar@{<-}[r]_-{\omega_1}             & y_1 \ar@{..}[r]               & 
}
\]
Now, iterated application of Step 1 gives the unfolded diagram, 
\begin{equation} \label{case-4}
\xymatrix@!R=4px{
\ar@{~}[r]_-{\delta} & z  &    &   &   &  &  & \ar[llllll]^-{-\inv{f_L}\omega_1} y_1 \ar@{..}[r]  & 
}.
\end{equation}
We observe that $\inv{f_L}\omega_1 \neq 0$: for if $\sigma_1$ is direct, then $\sigma_1 f_L = 0$ because $\sigma_n = f_L \tau_L$, whence $\inv{\omega_1} f_L \neq 0$. Similarly, if $\sigma_1$ is inverse, then $\sigma_1 \omega_1 = 0$ means that $\inv{\omega_1} f_L \neq 0$ by gentleness of $(Q,I)$.

\smallskip
\noindent \textbf{Step 3.} {\it Homotopy into a string complex.}
\smallskip

We note that the unfolded diagram \eqref{case-4} is already, up to a sign, the unfolded diagram of a string complex. It is straightforward to construct an isomorphism between the string complex given by the unfolded diagram \eqref{case-4} and the string complex given by the unfolded diagram without any signs. 
For the remaining cases, one can define the required isomorphism through an unfolded diagram. Below, we explicitly give the construction for \eqref{case-1}; cases \eqref{case-2} and \eqref{case-3} are analogous.

Let $\N = (N^n, d^n_{\N})$ be the complex in $\KminusL$ corresponding to the unfolded diagram \eqref{case-1}.
For case \eqref{case-1}, the unfolded diagram of the corresponding string complex is
\begin{equation} \label{case-1a}
\scalebox{1}{
\xymatrix@!R=4px{
\ar@{~}[r]^-{\delta} & z \ar[rrrrrr]^-{\tau_L g_k}  &    &   &   &  &  & y_k \ar@{..}[r]  & x_{i,0} \ar@{<-}[r]^-{\omega_i} & y_i \arr^-{\theta_i}  & x_{i,n-2} \ar@{..}[r] & 
}.}
\end{equation}
Write $\P = (P^n, d^n_{\P})$ for the complex in $\KminusL$ whose unfolded diagram is given by \eqref{case-1a}. We now define an isomorphism $\phi^\bullet \colon \N \to \P$ whose nonzero components are defined via the unfolded diagram given in Figure~\ref{fig:case-1}. Recall that $\phi^\bullet$ is a family of morphisms consisting of matrices $\phi^n \colon N^n \to P^n$ whose entries are (possibly trivial and signed) paths in $(Q,I)$.

In Figure~\ref{fig:case-1}, the morphisms occurring as diagonal entries in the $\phi^n$ are indicated by vertical equals signs. Those entries which are supported on the indecomposable projective modules occurring in the subword $\delta$ carry the sign $-1$; those supported at all other indecomposable projective modules are identity morphisms. In this way we see that each $\phi^n$ has no zero entries on its diagonal, and each entry is either $1$ or $-1$.

To ensure commutativity of the diagram, we need to introduce some correction terms off the diagonals of $\phi^n$. These terms are indicated by red arrows marked $-1$ in the Figure~\ref{fig:case-1}. By examining Figure~\ref{fig:case-1}, one observes that the diagram commutes (meaning that $\phi^\bullet \colon \N \to \P$ is a well-defined morphism of complexes); we note that if $g_1 \neq {\rm id}_{x_{n-1}}$ then $\inv{\sigma_n} = g_1 \inv{\omega_1}$, and if $\theta_1$ is direct then $g_1 \theta_1 =0$, each by definition of the graph map giving the overlap (and gentleness of $(Q,I)$). If $g_1 = {\rm id}_{x_{n-1}}$, then the off-diagonal components continue further to the right, each one carrying a minus sign. In particular, each off-diagonal entry in $\phi^\bullet$ corresponds to the components of the graph map $\f$ with those incident with one copy of the band deleted.

Now arguing on the rank shows that for each $n$, $\phi^n$ is a full rank matrix, meaning that $\phi^\bullet \colon \N \to \P$ is an isomorphism. Alternatively, using unfolded diagrams as in Figure~\ref{fig:case-1}, one can explicitly write down the inverse of $\phi^\bullet$. This completes the proof of Theorem~\ref{thm:graph-band-to-string}\eqref{graph:longer}. \qedhere
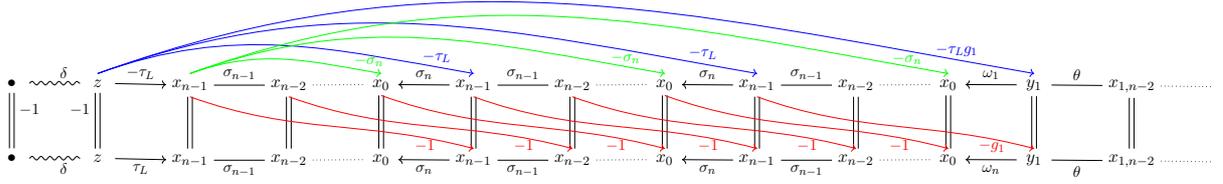
\begin{figure}[H]
\begin{tikzpicture}
\node[scale=.59] at (0,0){$
\begin{tikzpicture}
\matrix (m) [matrix of math nodes,row sep=3em,column sep=3em,minimum width=2em]
  {
     \bullet & z & x_{n-1} & x_{n-2} & x_0 & x_{n-1} & x_{n-2} & x_0 & x_{n-1} & x_{n-2} & x_0 & y_1 & x_{1,n-2} &  \quad\\
     \bullet & z & x_{n-1} & x_{n-2} & x_0 & x_{n-1} & x_{n-2} & x_0 & x_{n-1} & x_{n-2} & x_0 & y_1 & x_{1,n-2} &  \quad\\};
     
\draw[-,decorate,decoration={snake,amplitude=.4mm,segment length=2mm}] 
(m-1-1)-- node [midway,anchor=south west,scale=.9]{$\delta$} (m-1-2);
\draw[-,decorate,decoration={snake,amplitude=.4mm,segment length=2mm}] 
(m-2-1)-- node [anchor=north west,scale=.9]{$\delta$} (m-2-2);     

\draw[->]  (m-1-2)-- node [above,scale=.9]{$-\tau_L$} (m-1-3);
\draw[->]  (m-2-2)-- node [below,scale=.9]{$\tau_L$} (m-2-3);

\draw (m-1-3)-- node [above,scale=.9]{$\sigma_{n-1}$} (m-1-4);
\draw (m-2-3)-- node [below,scale=.9]{$\sigma_{n-1}$} (m-2-4);

\draw[dotted] (m-1-4)-- node [above,scale=.9]{} (m-1-5);
\draw[dotted] (m-2-4)-- node [below,scale=.9]{} (m-2-5);

\draw[<-] (m-1-5)-- node [above,scale=.9]{$\sigma_{n}$} (m-1-6);
\draw[<-] (m-2-5)-- node [below,scale=.9]{$\sigma_{n}$} (m-2-6);

\draw[-] (m-1-6)-- node [above,scale=.9]{$\sigma_{n-1}$} (m-1-7);
\draw[-] (m-2-6)-- node [below,scale=.9]{$\sigma_{n-1}$} (m-2-7);

\draw[dotted] (m-1-7)-- node [above,scale=.9]{} (m-1-8);
\draw[dotted] (m-2-7)-- node [below,scale=.9]{} (m-2-8);

\draw[<-] (m-1-8)-- node [above,scale=.9]{$\sigma_{n}$} (m-1-9);
\draw[<-] (m-2-8)-- node [below,scale=.9]{$\sigma_{n}$} (m-2-9);

\draw[-] (m-1-9)-- node [above,scale=.9]{$\sigma_{n-1}$} (m-1-10);
\draw[-] (m-2-9)-- node [below,scale=.9]{$\sigma_{n-1}$} (m-2-10);

\draw[dotted] (m-1-10)-- node [above,scale=.9]{} (m-1-11);
\draw[dotted] (m-2-10)-- node [below,scale=.9]{} (m-2-11);

\draw[<-] (m-1-11)-- node [above,scale=.9]{$\omega_{1}$} (m-1-12);
\draw[<-] (m-2-11)-- node [below,scale=.9]{$\omega_{n}$} (m-2-12);

\draw[-] (m-1-12)-- node [above,scale=.9]{$\theta$} (m-1-13);
\draw[-] (m-2-12)-- node [below,scale=.9]{$\theta$} (m-2-13);

\draw[dotted] (m-1-13)-- node [above,scale=.9]{} (m-1-14);
\draw[dotted] (m-2-13)-- node [below,scale=.9]{} (m-2-14);

%===============
% identity maps  
%===============
\draw[transform canvas={xshift=-1.5pt}] (m-1-1) -- (m-2-1);
\draw[transform canvas={xshift=1.5pt}] (m-1-1) -- node [anchor=south west,scale=.9]{$-1$}(m-2-1);

\draw[transform canvas={xshift=-1.5pt}] (m-1-2) -- node [anchor=south east,scale=.9]{$-1$} (m-2-2);
\draw[transform canvas={xshift=1.5pt}] (m-1-2) --  (m-2-2);

\draw[transform canvas={xshift=-1.5pt}] (m-1-3) -- (m-2-3);
\draw[transform canvas={xshift=1.5pt}] (m-1-3) --  (m-2-3);

\draw[transform canvas={xshift=-1.5pt}] (m-1-4) -- (m-2-4);
\draw[transform canvas={xshift=1.5pt}] (m-1-4) --  (m-2-4);

\draw[transform canvas={xshift=-1.5pt}] (m-1-5) -- (m-2-5);
\draw[transform canvas={xshift=1.5pt}] (m-1-5) --  (m-2-5);

\draw[transform canvas={xshift=-1.5pt}] (m-1-6) -- (m-2-6);
\draw[transform canvas={xshift=1.5pt}] (m-1-6) --  (m-2-6);

\draw[transform canvas={xshift=-1.5pt}] (m-1-7) -- (m-2-7);
\draw[transform canvas={xshift=1.5pt}] (m-1-7) --  (m-2-7);

\draw[transform canvas={xshift=-1.5pt}] (m-1-8) -- (m-2-8);
\draw[transform canvas={xshift=1.5pt}] (m-1-8) --  (m-2-8);

\draw[transform canvas={xshift=-1.5pt}] (m-1-9) -- (m-2-9);
\draw[transform canvas={xshift=1.5pt}] (m-1-9) --  (m-2-9);

\draw[transform canvas={xshift=-1.5pt}] (m-1-10) -- (m-2-10);
\draw[transform canvas={xshift=1.5pt}] (m-1-10) --  (m-2-10);

\draw[transform canvas={xshift=-1.5pt}] (m-1-11) -- (m-2-11);
\draw[transform canvas={xshift=1.5pt}] (m-1-11) --  (m-2-11);

\draw[transform canvas={xshift=-1.5pt}] (m-1-12) -- (m-2-12);
\draw[transform canvas={xshift=1.5pt}] (m-1-12) --  (m-2-12);

\draw[transform canvas={xshift=-1.5pt}] (m-1-13) -- (m-2-13);
\draw[transform canvas={xshift=1.5pt}] (m-1-13) --  (m-2-13);

%================
%arrows in color
%================
 
\draw [->,blue] (m-1-2.north) to [out=20,in=170]  node[above left=-.5cm and -85.2pt,anchor=south west,scale=.9]{$-\tau_L$}  (m-1-6.north);

\draw [->,blue] (m-1-2.north) to [out=20,in=170]  node[above left=-.9cm and -165.2pt,anchor=south west,scale=.9]{$-\tau_L$}  (m-1-9.north);

\draw [->,blue] (m-1-2.north) to [out=20,in=170]  node[above left=-1.3cm and -235.2pt,anchor=south west,scale=.9]{$-\tau_Lg_1$}  (m-1-12.north);

\draw [->,green] (m-1-3.north) to [out=20,in=170]  node[above left=-.28cm and -41pt,anchor=south west,scale=.9]{$-\sigma_n$}  (m-1-5.north);

\draw [->,green] (m-1-3.north) to [out=20,in=170]  node[above left=-.69cm and -115pt,anchor=south west,scale=.9]{$-\sigma_n$}  (m-1-8.north);

\draw [->,green] (m-1-3.north) to [out=20,in=170]  node[above left=-1.19cm and -205pt,anchor=south west,scale=.9]{$-\sigma_n$}  (m-1-11.north);

\draw [->,red] (m-1-3.south) to [out=-20,in=165]  node[above left=-.2cm and -50pt,anchor=north west,scale=.9]{$-1$}  (m-2-6.north);

\draw [->,red] (m-1-4.south) to [out=-20,in=165]  node[above left=-.2cm and -50pt,anchor=north west,scale=.9]{$-1$}  (m-2-7.north);

\draw [->,red] (m-1-5.south) to [out=-20,in=165]  node[above left=-.2cm and -50pt,anchor=north west,scale=.9]{$-1$}  (m-2-8.north);

\draw [->,red] (m-1-6.south) to [out=-20,in=165]  node[above left=-.2cm and -50pt,anchor=north west,scale=.9]{$-1$}  (m-2-9.north);

\draw [->,red] (m-1-7.south) to [out=-20,in=165]  node[above left=-.2cm and -50pt,anchor=north west,scale=.9]{$-1$}  (m-2-10.north);

\draw [->,red] (m-1-8.south) to [out=-20,in=165]  node[above left=-.2cm and -50pt,anchor=north west,scale=.9]{$-1$}  (m-2-11.north);

\draw [->,red] (m-1-9.south) to [out=-20,in=165]  node[above left=-.2cm and -50pt,anchor=north west,scale=.9]{$-g_1$}  (m-2-12.north);
\end{tikzpicture}
$};
\end{tikzpicture}
\caption{Unfolded diagram defining the isomorphism $\phi^\bullet \colon \N \to \P$. The blue and green arrows represent components of the differential $d^\bullet_N$ that we wish to remove via the isomorphism. Morphisms in grey are diagonal entries; morphisms in red are off diagonal entries, if $g_1 = {\rm id}$ then they may continue to the right.} \label{fig:case-1}
\end{figure}
\end{proof}

Below we state a dual version of Theorem~\ref{thm:graph-band-to-string} in which $\sigma$ is homotopy string and $\tau$ is a homotopy band that extends \cite[Prop. 2.12]{CPS1}; the proof is analogous.

\begin{theorem} \label{thm:graph-string-to-band}
Let $\Lambda$ be a gentle algebra, $\sigma$ be a homotopy string and $\tau$ be a homotopy band. For $\mu \in \kk^*$,  let $\f \colon \P_\sigma \to \B_{\tau,\mu}$ be a graph map determined by a (possibly trivial) maximal common homotopy string $\rho$.
\begin{enumerate}
\item {\rm (\cite[Prop. 2.12]{CPS1})} Suppose $\rho$ is a proper subword of $\tau$. Then, after suitable rotation of $\tau$, there is a decomposition $\sigma = \beta \sigma_L \rho \sigma_R \alpha$ and a decomposition $\tau = \rho \gamma$. Then $\M_{\f} \cong \P_c$, where $c = \beta \sigma_L \inv{\gamma} \sigma_R \alpha$. This is indicated in the unfolded diagram below, in which $\gamma = \gamma_\ell \cdots \gamma_1$. Note that, if $f_L \neq \emptyset$ then $\sigma_L \inv{\gamma_1} = f_L$; similarly, if $f_R \neq \emptyset$ then $\inv{\gamma_\ell} \sigma_R = \inv{f_R}$.
\[
\xymatrix@!R=4px{
\P_\sigma \colon & \ar@{~}[r]^-{\beta}   & \xydot \arr^-{\sigma_L} \ar[d]_-{f_L}  & \xydot \arr^-{\rho_k} \ar@{=}[d]  &  \cdots \arr^-{\rho_1}  & \xydot \arr^-{\sigma_R}  \ar@{=}[d]   & \xydot \ar@{~}[r]^-{\alpha} \ar[d]^-{f_R} &  & & & \\
\B_\tau \colon     & \xydot \arr_-{\gamma_2} & \xydot \arr_-{\gamma_1}                          & \xydot \arr_-{\rho_k}                   & \cdots \arr_-{\rho_1}   & \xydot \arr_-{\gamma_\ell}                           & \xydot \arr_-{\gamma_{\ell - 1}}                      & \xydot \ar@{..}[r] & \xydot \arr_-{\gamma_1} & \xydot \arr_-{\rho_k} & \xydot 
}
\]
\item Suppose there is an integer $\ell \geq 1$ such that $\rho$ is a proper subword of $\tau^{\ell+1}$ and $\tau^\ell$ is a (not necessarily proper) subword of $\rho$. Then, after suitable rotation of $\tau$, there is a decomposition $\tau = \delta \gamma$, with $\delta$ or $\gamma$ possibly trivial, such that 
\[
\sigma = \beta \tau^\ell \delta \alpha \text{ for some } \ell \geq 1 \text{ and some homotopy strings } \alpha \text{ and } \beta. 
\]
In this case, $\M_{\f} = \P_c$, where $c = \beta \tau^{\ell - 1} \delta \alpha$.
\end{enumerate}
\end{theorem}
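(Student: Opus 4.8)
The plan is to mirror, step for step, the three-part argument proving Theorem~\ref{thm:graph-band-to-string}\eqref{graph:longer}, exchanging throughout the roles of source and target. Since part (1) is precisely \cite[Prop. 2.12]{CPS1}, only part (2) needs proof. The engine of the whole argument is the local move of Lemma~\ref{lem:homotopy}, and this lemma is insensitive to which of the two complexes carries the band: it simply removes a summand $\bigl(Q \xrightarrow{\lambda\cdot\mathds{1}} Q\bigr)$ wherever an invertible identity component appears in the differential of $\M_\f$, at the cost of the composite correction terms $-\beta_i\alpha_j$ and $-\beta_i f$. Thus Step~1 (the translation of Lemma~\ref{lem:homotopy} into the language of unfolded diagrams) transfers verbatim, and I would reuse it unchanged.

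For Step~2 the picture is dualised. In Theorem~\ref{thm:graph-band-to-string} the overlap contained a copy $\sigma^\ell$ of the \emph{source} band and the iterated removal collapsed one period inside the \emph{target} string. Here the overlap contains a copy $\tau^\ell$ of the \emph{target} band, sitting inside the source string via the decomposition $\sigma=\beta\tau^\ell\delta\alpha$. The identity components of $\f$ run along one full period of $\tau$ in the overlap, and iterating Step~1 collapses exactly that period, turning $\tau^\ell$ into $\tau^{\ell-1}$ and producing the word $c=\beta\tau^{\ell-1}\delta\alpha$. The required case distinctions are the mirror images of those in the earlier proof: one splits according to whether the boundary homotopy letter of $\tau$ at the collapsed end is direct or inverse, and according to whether the graph-map components $f_L$ and $f_R$ at the two ends of the overlap are trivial. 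Here the endpoint identities $\sigma_L\inv{\gamma_1}=f_L$ and $\inv{\gamma_\ell}\sigma_R=\inv{f_R}$ recorded in part (1) play the role that $\tau_L\inv{\alpha_1}=\inv{f_L}$ and $\inv{\alpha_\ell}\tau_R=f_R$ played before, and the same gentleness arguments guarantee that the surviving composites (of type $-\inv{f_L}\omega$ or $-\tau_L g$) are nonzero.

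Step~3 is then identical in spirit: the unfolded diagram left after Step~2 is, up to signs, that of a string complex, and I would remove the signs and the residual off-diagonal composites by constructing an explicit isomorphism in $\KminusL$ of exactly the shape displayed in Figure~\ref{fig:case-1}, with diagonal entries $\pm 1$ (carrying $-1$ on the projectives supported in one of the two tails) and off-diagonal correction terms given by the components of $\f$ with one band period deleted; a rank count shows this is an isomorphism, identifying $\M_\f$ with $\P_c$. As a sanity check one may note the alternative, more conceptual route via the $\kk$-duality $D=\Hom_\kk(-,\kk)\colon\Db(\Lambda)\to\Db(\Lambda^{\mathrm{op}})$, under which $\Lambda^{\mathrm{op}}$ is again gentle, band and string complexes are interchanged only by source/target reversal, and mapping cones are preserved up to shift, so that Theorem~\ref{thm:graph-band-to-string}\eqref{graph:longer} applied over $\Lambda^{\mathrm{op}}$ yields the statement; this matches the ``reading the diagram upside down'' symmetry of Example~\ref{ex:two-kroneckers}. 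The main obstacle is the orientation-and-sign bookkeeping at the two collapsed ends: because source and target are swapped, the components $f_L,f_R$ now point in the opposite direction to their analogues, so one must re-derive in each boundary case which letters may be empty, direct, or inverse and verify by gentleness that the relevant surviving composite does not vanish—everything else is a routine transcription of the earlier computation.
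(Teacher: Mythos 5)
Your proposal is correct and takes essentially the same approach as the paper: the paper's entire proof of this theorem is the remark that the argument is analogous to that of Theorem~\ref{thm:graph-band-to-string}\eqref{graph:longer}, and your outline (reusing Step 1/Lemma~\ref{lem:homotopy} verbatim, mirroring the Step 2 cancellation so that one period of $\tau$ is deleted from $\sigma$, and redoing the sign-correcting isomorphism of Step 3, with the endpoint/gentleness checks re-derived for the swapped orientation of $f_L$, $f_R$) is precisely that analogous argument spelled out. The $\kk$-duality remark is a harmless aside that the paper does not use.
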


Finally, there is a version of Theorem~\ref{thm:graph-band-to-string} in which both $\sigma$ and $\tau$ are homotopy bands. Before stating it, we need a lemma explaining how a complex corresponding to an unfolded diagram given by a power of a band decomposes into indecomposable complexes. 

\begin{lemma} \label{lem:power-of-band}
Let $\kk$ be an algebraically closed field. Suppose $\tau$ is a homotopy band and $\sigma = \tau^n$. Let $\B_{\sigma,\lambda}$ be the complex induced by the unfolded diagram of $\sigma$ in which the scalar $\lambda \in \kk$ is placed on direct homotopy letter. Then $\B_{\sigma,\lambda} \cong \bigoplus_{i=1}^n \B_{\tau, \omega^i \mu}$, where $\mu = \sqrt[n]{\lambda}$ is some fixed $n^{\it th}$ root of $\lambda$ and $\omega$ is a primitive $n^{\it th}$ root of unity.
\end{lemma}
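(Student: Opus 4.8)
The plan is to realise $\B_{\sigma,\lambda}$ as a single complex whose differential is that of $\B_{\tau,1}$ twisted by an $n\times n$ matrix, and then to diagonalise that matrix over $\kk$. Since $\sigma=\tau^n$, the unfolded diagram of $\sigma$ consists of $n$ consecutive copies of the repeating pattern of $\tau$, glued along $n$ \emph{seams}: one between each pair of consecutive $\tau$-blocks and one wrap-around seam, each seam occurring at a copy of the distinguished direct homotopy letter on which, by the convention of Section~\ref{sec:unfolded}, the scalar is placed. In $\B_{\sigma,\lambda}$ the $n-1$ internal seams carry scalar $1$, while by suitable rotation the wrap-around seam carries $\lambda$. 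Writing $T$ for the underlying graded object of $\B_{\tau,1}$, the underlying graded object of $\B_{\sigma,\lambda}$ is then $T\otimes_\kk \kk^n$, the summand indexed by $\ell\in\{0,\dots,n-1\}$ recording which copy of $\tau$ one is in. I would first establish that the differential takes the form
\[
d_{\B_{\sigma,\lambda}} = d_0\otimes\mathrm{id}_{\kk^n} + e\otimes C,
\]
where $d_0$ is the contribution of the non-seam homotopy letters of $\tau$ (acting identically inside each block), $e$ is the seam component, and $C\in\mathrm{End}_\kk(\kk^n)$ sends the $\ell$-th basis vector to the $(\ell+1)$-th for $\ell<n-1$ and the $(n-1)$-th to $\lambda$ times the $0$-th.

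The second step is pure linear algebra. The matrix $C$ is the companion matrix of $x^n-\lambda$, so its eigenvalues are exactly the $n$-th roots of $\lambda$, namely $\omega^i\mu$ for $i=1,\dots,n$, where $\mu$ is a fixed $n$-th root of $\lambda$ and $\omega$ is a primitive $n$-th root of unity. Because $\lambda\in\kk^*$ and $\kk$ is algebraically closed, these $n$ roots are distinct (here we use both that $\omega$ is a genuine primitive $n$-th root of unity and that $\lambda\neq 0$), so $C$ is diagonalisable: there is $S\in\mathrm{GL}_n(\kk)$ with $S^{-1}CS = D := \mathrm{diag}(\omega\mu,\dots,\omega^n\mu)$. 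This is the only point at which algebraic closedness of $\kk$ is needed.

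Finally, transporting the complex structure along the degree-wise automorphism $\mathrm{id}_T\otimes S$ (the same matrix $S$ acting on the block index in every homological degree) conjugates the differential to
\[
(\mathrm{id}\otimes S^{-1})\bigl(d_0\otimes\mathrm{id}_{\kk^n}+e\otimes C\bigr)(\mathrm{id}\otimes S) = d_0\otimes\mathrm{id}_{\kk^n}+e\otimes D = \bigoplus_{i=1}^{n}\bigl(d_0+\omega^i\mu\,e\bigr).
\]
Since $d_0+\nu e$ is exactly the differential of $\B_{\tau,\nu}$, the right-hand side is the differential of $\bigoplus_{i=1}^n\B_{\tau,\omega^i\mu}$, and $\mathrm{id}_T\otimes S$ is the required isomorphism of complexes. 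I expect the main obstacle to be the first step: extracting the clean normal form $d_0\otimes\mathrm{id}+e\otimes C$ directly from the combinatorial definition of a band complex via its unfolded diagram. This requires verifying that every non-seam homotopy letter contributes identically in each of the $n$ blocks, that only the wrap-around seam carries $\lambda$ while the $n-1$ internal seams carry $1$, and that the signs in the differential are consistent across blocks. Once this normal form is secured, the diagonalisation and conjugation are routine.
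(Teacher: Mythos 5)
Your proposal is correct, and while the isomorphism it produces is ultimately the same one as in the paper, the derivation is genuinely different. The paper does not pass through a normal form of the differential: it exhibits, via the unfolded diagram in Figure~\ref{fig:split-mono}, an explicit split monomorphism $\B_{\tau,\omega^i\mu}\to\B_{\sigma,\lambda}$ for each $i$, whose vertical components run through the decreasing powers $(\omega^i\mu)^{n-1},(\omega^i\mu)^{n-2},\dots,\omega^i\mu,1$ as one moves through the successive copies of $\tau$ inside $\sigma$, and then asserts that the direct sum of these split monomorphisms is ``clearly'' an isomorphism, the inverse being the sum of the corresponding split epimorphisms. Those component-tuples are exactly the eigenvectors of your matrix $C$ for the eigenvalues $\omega^i\mu$, so the paper's assembled map is precisely your change-of-basis matrix $\mathrm{id}_T\otimes S$, and your diagonalisation step supplies the justification (invertibility of a Vandermonde-type matrix of eigenvectors for distinct eigenvalues) that the paper leaves implicit. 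What your route buys: the splitting is automatic once the eigenvalues are distinct, and it isolates exactly where the hypotheses enter --- though note that distinctness of the roots $\omega^i\mu$ requires $\operatorname{char}\kk\nmid n$, not merely $\lambda\neq 0$ and algebraic closedness; in characteristic $p$ dividing $n$ the polynomial $x^n-\lambda$ has repeated roots, $C$ is not diagonalisable, and the decomposition would instead involve higher-dimensional band complexes. This restriction is implicit in the lemma's statement itself (and hence in the paper's proof too), since it presupposes a primitive $n$-th root of unity. What the paper's route buys: the maps are written directly in the unfolded-diagram formalism used throughout, so no normal form has to be established. Your Step 1, which you flag as the main obstacle, is in fact routine: by definition the underlying graded module of $\B_{\tau^n,\lambda}$ consists of $n$ copies of that of $\B_{\tau,\cdot}$, the differential of a band complex carries no signs (signs arise only in mapping-cone constructions), and the convention of Section~\ref{sec:unfolded} places the scalar on a single distinguished direct homotopy letter, which after rotation is your wrap-around seam.
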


\begin{proof}
The unfolded diagram in Figure~\ref{fig:split-mono} shows how to define a split monomorphism $\B_{\tau, \omega \mu} \to \B_{\sigma,\lambda}$; the split monomorphism $\B_{\tau, \omega^i \mu} \to \B_{\sigma,\lambda}$ is defined analogously. The direct sum of these split monomorphisms is clearly an isomorphism (its inverse is given by the direct sum of the corresponding split epimorphisms). \qedhere
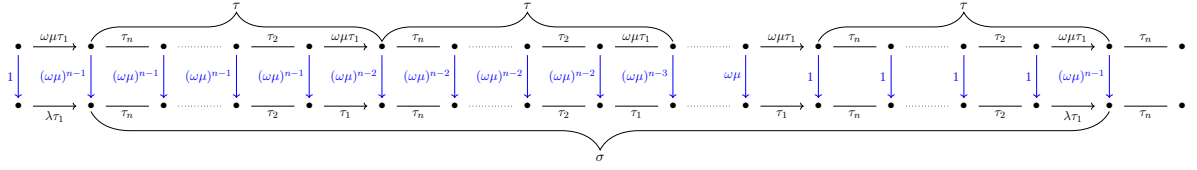
\begin{figure}[H]
\begin{tikzpicture}
\node[scale=.5] at (0,0){$
\begin{tikzpicture}
\matrix (m) [matrix of math nodes,row sep=3em,column sep=3em,minimum width=2em]
  {
     \bullet & \bullet & \bullet & \bullet & \bullet & \bullet & \bullet & \bullet & \bullet & \bullet & \bullet & \bullet & \bullet & \bullet & \bullet & \bullet & \bullet  \\
     \bullet & \bullet & \bullet & \bullet & \bullet & \bullet & \bullet & \bullet & \bullet & \bullet & \bullet & \bullet & \bullet & \bullet & \bullet & \bullet & \bullet  \\};
%===============
% First row
\draw[->]  (m-1-1)-- node [above,scale=.9]{$\omega\mu\tau_1$} (m-1-2);
\draw[-]  (m-1-2)-- node [above,scale=.9]{$\tau_n$} (m-1-3);
\draw[dotted]  (m-1-3)-- node [above,scale=.9]{} (m-1-4);
\draw[-]  (m-1-4)-- node [above,scale=.9]{$\tau_2$} (m-1-5);
\draw[->]  (m-1-5)-- node [above,scale=.9]{$\omega\mu\tau_1$} (m-1-6);
\draw[-]  (m-1-6)-- node [above,scale=.9]{$\tau_n$} (m-1-7);
\draw[dotted]  (m-1-7)-- node [above,scale=.9]{} (m-1-8);
\draw[-]  (m-1-8)-- node [above,scale=.9]{$\tau_2$} (m-1-9);
\draw[-]  (m-1-9)-- node [above,scale=.9]{$\omega\mu\tau_1$} (m-1-10);
\draw[dotted]  (m-1-10)-- node [above,scale=.9]{} (m-1-11);
\draw[->]  (m-1-11)-- node [above,scale=.9]{$\omega\mu\tau_1$} (m-1-12);
\draw[-]  (m-1-12)-- node [above,scale=.9]{$\tau_n$} (m-1-13);
\draw[dotted]  (m-1-13)-- node [above,scale=.9]{} (m-1-14);
\draw[-]  (m-1-14)-- node [above,scale=.9]{$\tau_2$} (m-1-15);
\draw[->]  (m-1-15)-- node [above,scale=.9]{$\omega\mu\tau_1$} (m-1-16);
\draw[-]  (m-1-16)-- node [above,scale=.9]{$\tau_n$} (m-1-17);
%===============
% Second row
\draw[->]  (m-2-1)-- node [below,scale=.9]{$\lambda\tau_1$} (m-2-2);
\draw[-]  (m-2-2)-- node [below,scale=.9]{$\tau_n$} (m-2-3);
\draw[dotted]  (m-2-3)-- node [below,scale=.9]{} (m-2-4);
\draw[-]  (m-2-4)-- node [below,scale=.9]{$\tau_2$} (m-2-5);
\draw[->]  (m-2-5)-- node [below,scale=.9]{$\tau_1$} (m-2-6);
\draw[-]  (m-2-6)-- node [below,scale=.9]{$\tau_n$} (m-2-7);
\draw[dotted] (m-2-7)-- node [below,scale=.9]{} (m-2-8);
\draw[-]  (m-2-8)-- node [below,scale=.9]{$\tau_2$} (m-2-9);
\draw[-]  (m-2-9)-- node [below,scale=.9]{$\tau_1$} (m-2-10);
\draw[dotted] (m-2-10)-- node [below,scale=.9]{} (m-2-11);
\draw[->]  (m-2-11)-- node [below,scale=.9]{$\tau_1$} (m-2-12);
\draw[-]  (m-2-12)-- node [below,scale=.9]{$\tau_n$} (m-2-13);
\draw[dotted] (m-2-13)-- node [below,scale=.9]{} (m-2-14);
\draw[-]  (m-2-14)-- node [below,scale=.9]{$\tau_2$} (m-2-15);
\draw[->]  (m-2-15)-- node [below,scale=.9]{$\lambda\tau_1$} (m-2-16);
\draw[-]  (m-2-16)-- node [below,scale=.9]{$\tau_n$} (m-2-17);
%===============
% braces
%===============
\draw [decorate,decoration={brace,amplitude=22pt,raise=4pt},xshift=0pt,yshift=1.2pt]
(m-1-2.center) -- (m-1-6.center)node [above,midway,yshift=25pt] {$\tau$};
\draw [decorate,decoration={brace,amplitude=22pt,raise=4pt},xshift=0pt,yshift=1.2pt]
(m-1-6.center) -- (m-1-10.center)node [above,midway,yshift=25pt] {$\tau$};
\draw [decorate,decoration={brace,amplitude=22pt,raise=4pt},xshift=0pt,yshift=1.2pt]
(m-1-12.center) -- (m-1-16.center)node [above,midway,yshift=25pt] {$\tau$};
%%%%
\draw [decorate,decoration={brace,amplitude=29pt,mirror,raise=4pt},xshift=0pt,yshift=1.2pt]
(m-2-2.center) -- (m-2-16.center)node [above,midway,yshift=-45pt] {$\sigma$};

%%================
%% blue maps
%%================
\draw [->,blue] (m-1-1.south) to node[above,anchor=east,scale=.9]{$1$}  (m-2-1.north);
\draw [->,blue] (m-1-2.south) to node[below,anchor=east,scale=.9]{$(\omega\mu)^{n-1}$}  (m-2-2.north);
\draw [->,blue] (m-1-3.south) to node[below,anchor=east,scale=.9]{$(\omega\mu)^{n-1}$}  (m-2-3.north);
\draw [->,blue] (m-1-4.south) to node[below,anchor=east,scale=.9]{$(\omega\mu)^{n-1}$}  (m-2-4.north);
\draw [->,blue] (m-1-5.south) to node[below,anchor=east,scale=.9]{$(\omega\mu)^{n-1}$}  (m-2-5.north);
\draw [->,blue] (m-1-6.south) to node[below,anchor=east,scale=.9]{$(\omega\mu)^{n-2}$}  (m-2-6.north);
\draw [->,blue] (m-1-7.south) to node[below,anchor=east,scale=.9]{$(\omega\mu)^{n-2}$}  (m-2-7.north);
\draw [->,blue] (m-1-8.south) to node[below,anchor=east,scale=.9]{$(\omega\mu)^{n-2}$}  (m-2-8.north);
\draw [->,blue] (m-1-9.south) to node[below,anchor=east,scale=.9]{$(\omega\mu)^{n-2}$}  (m-2-9.north);
\draw [->,blue] (m-1-10.south) to node[below,anchor=east,scale=.9]{$(\omega\mu)^{n-3}$}  (m-2-10.north);
\draw [->,blue] (m-1-11.south) to node[below,anchor=east,scale=.9]{$\omega\mu$}  (m-2-11.north);
\draw [->,blue] (m-1-12.south) to node[below,anchor=east,scale=.9]{$1$}  (m-2-12.north);
\draw [->,blue] (m-1-13.south) to node[below,anchor=east,scale=.9]{$1$}  (m-2-13.north);
\draw [->,blue] (m-1-14.south) to node[below,anchor=east,scale=.9]{$1$}  (m-2-14.north);
\draw [->,blue] (m-1-15.south) to node[below,anchor=east,scale=.9]{$1$}  (m-2-15.north);
\draw [->,blue] (m-1-16.south) to node[below,anchor=east,scale=.9]{$(\omega\mu)^{n-1}$}  (m-2-16.north);
\end{tikzpicture}
$};
\end{tikzpicture}
\caption{Unfolded diagram of a split monomorphism $\B_{\tau, \omega \mu} \to \B_{\sigma, \lambda}$, where the copies of $\tau$ on the top row are identified, and copies of $\sigma$ on the bottom row are identified.} \label{fig:split-mono}
\end{figure}
\end{proof}

The first three statements of the following theorem extend \cite[Prop. 2.9]{CPS1} to the case of a graph map between band complexes in which the overlap is longer than at least one of the homotopy bands. Again, the effect on the mapping cone calculus is `to remove the shorter homotopy band from the longer homotopy band'. The final statement uses Lemma~\ref{lem:power-of-band} to correct the statement of \cite[Prop. 2.9]{CPS1} in the case that the overlap is shorter than both homotopy bands.

\begin{theorem} \label{thm:graph-band-to-band}
Let $\Lambda$ be a gentle algebra over an algebraically closed field $\kk$. Let $(\sigma, \lambda)$ and $(\tau, \mu)$ be homotopy bands, where by convention $\lambda$ and $\mu$ are placed on direct homotopy letters. Suppose $\f \colon \B_{\sigma,\lambda} \to \B_{\tau,\mu}$ is a graph map determined by a (possibly trivial) maximal common homotopy string $\rho$.
\begin{enumerate}
\item \label{case:1} Suppose $\length{\tau} = \length{\sigma}$ and both $\sigma$ and $\tau$ are subwords of $\rho$. Then $\f$ is an isomorphism and $\M_{\f} \cong 0^\bullet$.
\item \label{case:2} Suppose $\length{\tau} < \length{\sigma}$ and $\tau$ is a subword of $\rho$. Then there is a homotopy band $\theta$ and an integer $k \geq 1$ such that $\sigma = \tau \theta^k$ and 
\[
\M_{\f} \cong \bigoplus_{i=1}^k \B_{\theta,\omega^i \sqrt[k]{-\lambda \mu}},
\]
where $\omega$ is a primitive $k^{\it th}$ root of unity.
\item \label{case:3} Suppose $\length{\tau} > \length{\sigma}$ and $\sigma$ is a subword of $\rho$. Then there is a homotopy band $\phi$ and an integer $\ell \geq 1$ such that $\tau = \sigma \phi^\ell$ and 
\[
\M_{\f} \cong \bigoplus_{i=1}^\ell \B_{\phi, \epsilon^i \sqrt[\ell]{-\lambda \mu}},
\]
where  $\epsilon$ is a primitive $\ell^{\it th}$ root of unity.
\item \label{case:4} Suppose that $\rho$ is a proper subword of both $\sigma$ and $\tau$. After suitable rotations of $\sigma$ and $\tau$, we have $\sigma = \rho \alpha$ and $\tau = \rho \gamma$. Then, there exists a homotopy band $\theta$ and an integer $k \geq 1$ such that $\inv{\gamma}\alpha = \theta^k$ and
\[
\M_{\f} = 
\begin{cases}
\bigoplus_{i = 1}^k \B_{\theta, \omega^i \sqrt[k]{\lambda \mu}} & \text{ if } \length{\rho} \text{ is even; and,} \\
\bigoplus_{i = 1}^k \B_{\theta, \omega^i \sqrt[k]{-\lambda \mu}} &  \text{ if } \length{\rho} \text{ is odd,}
\end{cases}
\]
where $\omega$ is a primitive $k^{\it th}$ root of unity.
\end{enumerate}
\end{theorem}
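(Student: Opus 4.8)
The plan is to dispatch all four cases by the two-stage mechanism already used in Theorems~\ref{thm:graph-band-to-string} and~\ref{thm:graph-string-to-band}: first reduce the mapping cone $\M_{\f}$, by repeatedly applying the homotopy-removal Lemma~\ref{lem:homotopy} to strip out the identity morphisms coming from the overlap $\rho$, until what survives is the unfolded diagram of a power of a single homotopy band; and then split that power into indecomposables by Lemma~\ref{lem:power-of-band}. The new ingredient, relative to the band-to-string cases, is that Lemma~\ref{lem:power-of-band} needs the precise scalar $\nu$ sitting on the resulting band, so the substance of the argument is to follow $\nu$ through the reduction. For Case~\eqref{case:1} I would observe that if $\length{\sigma} = \length{\tau}$ and both bands occur as subwords of $\rho$, then ${}^\infty\sigma^\infty$ and ${}^\infty\tau^\infty$ agree on a window at least as long as their common period and hence coincide; thus $\sigma = \tau$ up to rotation and inversion and the overlap is in fact infinite, so Corollary~\ref{cor:infinite}(1) applies directly to give that $\f$ is an isomorphism and $\M_{\f} \cong 0^\bullet$.

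Cases~\eqref{case:2} and~\eqref{case:3} are interchanged by reversing the roles of $\sigma$ and $\tau$ (reading the unfolded diagram upside down), exactly as Theorem~\ref{thm:graph-band-to-string} dualises to Theorem~\ref{thm:graph-string-to-band}, so it is enough to treat Case~\eqref{case:2}. I would first record the combinatorial normal form: because the finite overlap $\rho$ contains the whole of the shorter band $\tau$ while being a substring of both ${}^\infty\sigma^\infty$ and ${}^\infty\tau^\infty$, maximality of $\rho$ together with the graph-map endpoint conditions forces $\sigma$, after a suitable rotation, to factor as $\sigma = \tau\theta^k$ for a homotopy band $\theta$ and some $k \geq 1$; this is the band-to-band analogue of the factorisation in Theorem~\ref{thm:graph-band-to-string}\eqref{graph:longer}. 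The unfolded diagram of $\M_{\f}$ is then set up as in Step~1 of the proof of Theorem~\ref{thm:graph-band-to-string}, and iterating that step so as to remove the single copy of $\tau$ worth of identity morphisms (Step~2) collapses it to the unfolded diagram of $\B_{\theta^k, \nu}$ with $\nu$ placed on a direct homotopy letter. Lemma~\ref{lem:power-of-band} then gives $\M_{\f} \cong \bigoplus_{i=1}^k \B_{\theta, \omega^i \sqrt[k]{\nu}}$, and tracing $\lambda$, $\mu$ and the signs around the closed-up cycle should yield $\nu = -\lambda\mu$.

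For Case~\eqref{case:4}, rotating $\sigma$ and $\tau$ to share the maximal common prefix $\rho$ gives $\sigma = \rho\alpha$ and $\tau = \rho\gamma$, and the endpoint conditions ensure that $\inv{\gamma}\alpha$ closes up into a genuine homotopy band; writing it as $\theta^k$ for a primitive band $\theta$ is then just extraction of the primitive period of a cyclic word. Removing the overlap $\rho$ by repeated application of Lemma~\ref{lem:homotopy} amalgamates the two bands into the unfolded diagram of $\B_{\theta^k,\nu}$, which Lemma~\ref{lem:power-of-band} splits as $\bigoplus_{i=1}^k \B_{\theta, \omega^i \sqrt[k]{\nu}}$. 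Here $\nu = \pm\lambda\mu$, and the parity of $\length{\rho}$ enters through the sign: closing the cycle relocates $\lambda$ and $\mu$ from their original direct letters across the removed overlap, and the correction terms $-b_2 b_3$ produced by Lemma~\ref{lem:homotopy} each contribute a factor $-1$, so that the accumulated sign works out to $(-1)^{\length{\rho}}$, giving $\nu = \lambda\mu$ when $\length{\rho}$ is even and $\nu = -\lambda\mu$ when $\length{\rho}$ is odd.

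The hard part, throughout, is the scalar-and-sign bookkeeping behind the identifications $\nu = -\lambda\mu$ in Case~\eqref{case:2} and $\nu = (-1)^{\length{\rho}}\lambda\mu$ in Case~\eqref{case:4}. One has to verify that the diagram surviving the removals is genuinely that of a power of a band rather than some unrelated word, keep track at every application of Lemma~\ref{lem:homotopy} of where $\lambda$ and $\mu$ sit relative to the overlap, and finally re-express the answer with the scalar moved onto a direct homotopy letter as the convention of Lemma~\ref{lem:power-of-band} demands; it is at this last step, through Lemma~\ref{lem:power-of-band}, that algebraic closedness of $\kk$ is essential. By contrast, the combinatorial normal-form claims ($\sigma = \tau\theta^k$ and $\inv{\gamma}\alpha = \theta^k$) should follow routinely from maximality of the overlap together with Lemma~\ref{lem:infinite} and Corollary~\ref{cor:infinite}, which eliminate the infinite-overlap degenerations.
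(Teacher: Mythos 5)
Your overall architecture coincides with the paper's: case \eqref{case:1} via Corollary~\ref{cor:infinite}, cases \eqref{case:2} and \eqref{case:3} interchanged by duality, and in each remaining case a two-stage reduction --- strip out the identity components of $\f$ by iterating Lemma~\ref{lem:homotopy} until one reaches a complex of the form $\B_{\theta^k,\nu}$, then split it with Lemma~\ref{lem:power-of-band}. That is exactly how the paper proceeds, and your scalar claims ($\nu=-\lambda\mu$ in case \eqref{case:2}, $\nu=(-1)^{\length{\rho}}\lambda\mu$ in case \eqref{case:4}) agree with its conclusions.

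The genuine gap is in the step you dismiss as routine. You assert that the normal forms $\sigma=\tau\theta^k$ and $\inv{\gamma}\alpha=\theta^k$ ``follow routinely from maximality of the overlap together with Lemma~\ref{lem:infinite} and Corollary~\ref{cor:infinite}.'' Those two results only eliminate infinite overlaps; they say nothing about whether the leftover word closes up into a band. Writing $\sigma=\tau\alpha$ after rotation, it is easy to see that $\alpha$ starts and ends at the same vertex and in the same degree, but that is not enough: one must prove that the wrap-around juxtaposition $\alpha_1\alpha_t$ (first letter of $\alpha$ against its last letter) is a legal homotopy-band concatenation, i.e.\ that the two letters remain distinct homotopy letters satisfying the gentle composition conditions. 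This is precisely the substantive combinatorial content of the paper's proof of case \eqref{case:2}: it is quick when $\tau$ is a \emph{proper} subword of $\rho$ (then $\alpha_t=\tau_n=\sigma_m$ and one compares with $\sigma$), but in the boundary case $\rho=\tau$ it requires a case analysis over the orientations of $\alpha_1,\alpha_t,\tau_1,\tau_n$, using the graph-map endpoint conditions (such as $\alpha_1=f_L\tau_1$, $\tau_n=\alpha_t f_R$, $\tau_1=\inv{f_L}\alpha_1$) together with gentleness of $(Q,I)$; for instance, when $\tau_1$ is direct and $\tau_n$ is inverse, one deduces from $\tau_1\alpha_t=0$, $\alpha_1\tau_n=0$ and $\tau_1\tau_n$ being defined that $\alpha_1\alpha_t$ must be defined. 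Without this verification the word surviving the reduction could a priori fail to be a homotopy band at all, and Lemma~\ref{lem:power-of-band} would not apply. (Your sign bookkeeping in cases \eqref{case:2} and \eqref{case:4} is likewise only asserted, but there you at least flag it as the point to be checked, and the paper itself delegates it to the arguments of \cite[Prop.~2.9]{CPS1}; the closure of $\alpha$ into a band is the one step where the justification you propose is actually wrong.)
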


\begin{proof}
For \eqref{case:1}, $\length{\tau} = \length{\sigma}$ and both $\sigma$ and $\tau$ being subwords of $\rho$ means that in order for the corresponding unfolded diagram to be commutative, $\rho$ must be an infinite overlap. By Corollary~\ref{cor:infinite}, it follows that $\f$ is an isomorphism. In particular, for the remainder of the proof we may assume without loss of generality that $\rho$ is finite.
 
For \eqref{case:2}, the assumption $\length{\tau} < \length{\sigma}$ implies that, after suitable rotation, $\rho = \tau \rho'$ and $\sigma = \tau \alpha$, with $\alpha$ nontrivial and $\rho'$ possibly trivial. We have $s(\alpha) = s(\sigma) = e(\sigma) = e(\tau) = s(\tau) = e(\alpha)$, showing that $\alpha$ starts and ends at the same vertex in $Q$. The same argument applied to degrees shows that $\alpha$ starts and ends in the same degree. 

Write $\sigma = \sigma_m \cdots \sigma_1$, $\tau = \tau_n \cdots \tau_1$ and $\alpha = \alpha_t \cdots \alpha_1$. To conclude that $\alpha$ is a power of a band, we must show that $\alpha_1 \alpha_t$ is defined as a homotopy band, i.e. they remain distinct homotopy letters. 
If $\tau$ is a proper subword of $\rho$, this is clear: then $\rho' = \alpha_t \rho''$ for some possibly trivial homotopy string $\rho''$. But since $\alpha_t \rho'$ is a subword of ${}^\infty \tau^\infty$, we have that $\alpha_t = \tau_n = \sigma_m$. But since $\alpha_1 = \sigma_1$ and $\sigma$ is a homotopy band, it follows that $\alpha_1 \alpha_t$ is defined as a homotopy band.

Now suppose that $\tau$ is not a proper subword of $\rho$, i.e. $\tau = \rho$. In this case, we have the following unfolded diagram of the graph map $\f$ in which $f_L$ and $f_R$ are possibly empty.
\[
\xymatrix@!R=4px{
\ar@{.}[r] & \xydot \ar[d]_-{f_L} \arr^-{\alpha_1}  & x \ar@{=}[d] \arr^-{\sigma_m} & \xydot \ar@{=}[d] \ar@{.}[r] & \xydot \ar@{=}[d] \arr^-{\sigma_{t+1}} & x \ar@{=}[d] \arr^-{\alpha_t} & \xydot \ar[d]^-{f_R} \ar@{.}[r] & \\
\ar@{.}[r] & \xydot \arr_-{\tau_1}                         & x \arr_-{\tau_n}                        & \xydot \ar@{.}[r]                  & \xydot \arr_-{\tau_1}                             & x \arr_-{\tau_n}                     & \xydot \ar@{.}[r] &
}
\]
If $\alpha_1$ and $\alpha_t$ are both direct, then $\alpha_1 = f_L \tau_1$ and $\tau_n = \alpha_t f_R$. Thus $\tau_1 \tau_n$ being defined implies that $\alpha_1 \alpha_t$ is defined. Dually when $\alpha_1$ and $\alpha_t$ are both inverse.

Now suppose $\alpha_1$ is inverse and $\alpha_t$ is direct; the argument when $\alpha_1$ is direct and $\alpha_t$ is inverse is dual. There are four cases, which we treat below.
\begin{enumerate}[label=(\roman*)]
\item Suppose $\tau_1$ and $\tau_n$ are both direct. Then $\sigma_m = \tau_n = \alpha_t f_R$, giving us the unfolded diagram
\[
\xymatrix{ \ar@{.}[r] & \xydot \ar@{<-}[r]^-{\alpha_1 = \sigma_1} & \xydot \ar[r]^-{\sigma_m = \alpha_t f_R} & \xydot \ar@{.}[r] & }
\]
since $\sigma_1 \sigma_m$ is defined, it follows that so is $\alpha_1 \alpha_t$.
\item The case that $\tau_1$ and $\tau_n$ are both inverse is dual to the one above.
\item Suppose $\tau_1$ is inverse and $\tau_n$ is direct. Then $\tau_n = \alpha_t f_R$ and $\tau_1 = \inv{f_L} \alpha_1$, giving us the unfolded diagram
\[
\xymatrix{ \ar@{.}[r] & \xydot \ar@{<-}[r]^-{\tau_1 = \inv{f_L} \alpha_1} & \xydot \ar[r]^-{\tau_n = \alpha_t f_R} & \xydot \ar@{.}[r] & }
\]
since $\tau_1 \tau_n$ is defined, it follows that so is $\alpha_1 \alpha_t$.
\item Suppose $\tau_1$ is direct and $\tau_n$ is inverse. Now the fact that $\tau_1 \alpha_t = 0$, $\alpha_1 \tau_n = 0$ and $\tau_1 \tau_n$ is defined together with gentleness means that $\alpha_1 \alpha_t$ must also be defined.
\end{enumerate}

Hence, we find that $\alpha$ is (a power of a) homotopy band, i.e. $\alpha = \theta^k$ for some homotopy band $\theta$ and some integer $k \geq 1$.

Now, arguing as in Theorem~\ref{thm:graph-band-to-string} using Lemma~\ref{lem:homotopy} shows that $\M_{\f} \cong \B_{\alpha,-\lambda \mu}$, where $\B_{\alpha, -\lambda \mu}$ denotes the complex induced by the unfolded diagram for $\alpha$ with $-\lambda \mu$ placed on a direct homotopy letter. Applying Lemma~\ref{lem:power-of-band} shows that 
$\M_{\f} \cong \bigoplus_{i=1}^k \B_{\theta,\omega^i \sqrt[k]{-\lambda \mu}}$, where $\omega$ is a primitive $k^{\rm th}$ of unity.

Statement \eqref{case:3} is dual to statement \eqref{case:2}. 

Finally, for \eqref{case:4}, the argument given in the proof of \cite[Prop. 2.9]{CPS1} shows that $\M_{\f} \cong \B_{\inv{\gamma}\alpha,\pm \lambda\mu}$, where $\B_{\inv{\gamma}\alpha,\pm \lambda\mu}$ is the complex induced by the unfolded diagram for $\inv{\gamma} \alpha$ with $\pm \lambda \mu$ placed on a direct homotopy letter. The argument in the proof of \cite[Prop. 2.9]{CPS1} shows that $\inv{\gamma}\alpha$ is a (not necessarily trivial) power of a homotopy band, i.e. $\inv{\gamma}\alpha = \theta^k$ for some homotopy band $\theta$ and some integer $k \geq 1$. Thus, it now follows from Lemma~\ref{lem:power-of-band} that $\M_{\f} \cong \bigoplus_{i=1}^k \B_{\theta,\omega^i \sqrt[k]{\pm\lambda \mu}}$, where $\omega$ is again a primitive $k^{\rm th}$ of unity.
\end{proof}

\begin{example} \label{ex:not-a-power}
Let $\sigma$ and $\tau$ be the homotopy bands given in Example~\ref{ex:two-kroneckers} and $f\colon \B_{\tau,1}\to \B_{\sigma,1}$ be the graph map defined by the unfolded diagram \eqref{unfolded}. Note that, in this example, the roles of $\sigma$ and $\tau$ are interchanged in comparison with Theorem~\ref{thm:graph-band-to-band}. We have $\length{\tau} < \length{\sigma}$, putting us in case \eqref{case:3} of the theorem. In this case we have $\sigma = \tau d \inv{c} \, \inv{a} b$. 
In particular, $\ell = 1$ and $\phi = d \inv{c} \, \inv{a} b$ is a homotopy band. Hence, $\M_{\f} \cong \B_{\phi,-1}$ and the mapping cone is indecomposable.
\end{example}

\begin{example} \label{ex:cubed}
Let $\kk = \bC$ and let $\Lambda$ be the $\bC$-algebra given by the quiver with relations in Example~\ref{ex:two-kroneckers}. Let $\sigma$ and $\tau$ be the following homotopy bands
$\sigma = (d \inv{c})^7 \inv{a} b$ and $\tau = (d \inv{c})^4 \inv{a} b$.
Suppose $\f \colon \B_{\sigma,-1} \to \B_{\tau, 1}$ is the graph map whose unfolded diagram is given below.
\[
\begin{tikzpicture}
\node[scale=.5] at (0,0){$
\begin{tikzpicture}
\matrix (m) [matrix of math nodes,row sep=2.4em,column sep=1.8em,minimum width=2em]
  {
     3 & 1 & 3 & 1 & 3 & 1 & 3 & 1 & 3 & 1 & 2 & 1 & 3 & 1 & 3 & 1 & 3 & 1 & 3 & 1 & 3 & 1 \\
     2 & 1 & 3 & 1 & 3 & 1 & 3 & 1 & 3 & 1 & 2 & 1 & 3 & 1 & 3 & 1 & 3 & 1 & 3 & 1 & 2 & 1  
     \\};
%===============
% First row
\draw[<-]  (m-1-1)-- node [above,scale=.9]{$\inv{c}$} (m-1-2);
\draw[->]  (m-1-2)-- node [above,scale=.9]{$d$} (m-1-3);
\draw[<-]  (m-1-3)-- node [above,scale=.9]{$\inv{c}$} (m-1-4);
\draw[->]  (m-1-4)-- node [above,scale=.9]{$d$} (m-1-5);
\draw[<-]  (m-1-5)-- node [above,scale=.9]{$\inv{c}$} (m-1-6);
\draw[->]  (m-1-6)-- node [above,scale=.9]{$d$} (m-1-7);
\draw[<-]  (m-1-7)-- node [above,scale=.9]{$\inv{c}$} (m-1-8);
\draw[->]  (m-1-8)-- node [above,scale=.9]{$d$} (m-1-9);
\draw[<-]  (m-1-9)-- node [above,scale=.9]{$\inv{c}$} (m-1-10);
\draw[<-]  (m-1-10)-- node [above,scale=.9]{$\inv{a}$} (m-1-11);
\draw[->]  (m-1-11)-- node [above,scale=.9]{$b$} (m-1-12);
\draw[->]  (m-1-12)-- node [above,scale=.9]{$d$} (m-1-13);
\draw[<-]  (m-1-13)-- node [above,scale=.9]{$\inv{c}$} (m-1-14);
\draw[->]  (m-1-14)-- node [above,scale=.9]{$d$} (m-1-15);
\draw[<-]  (m-1-15)-- node [above,scale=.9]{$\inv{c}$} (m-1-16);
\draw[->]  (m-1-16)-- node [above,scale=.9]{$d$} (m-1-17);
\draw[<-]  (m-1-17)-- node [above,scale=.9]{$\inv{c}$} (m-1-18);
\draw[->]  (m-1-18)-- node [above,scale=.9]{$d$} (m-1-19);
\draw[<-]  (m-1-19)-- node [above,scale=.9]{$\inv{c}$} (m-1-20);
\draw[->]  (m-1-20)-- node [above,scale=.9]{$d$} (m-1-21);
\draw[<-]  (m-1-21)-- node [above,scale=.9]{$\inv{c}$} (m-1-22);
%===============
% Second row
\draw[->]  (m-2-1)-- node [below,scale=.9]{$b$} (m-2-2);
\draw[->]  (m-2-2)-- node [below,scale=.9]{$d$} (m-2-3);
\draw[<-]  (m-2-3)-- node [below,scale=.9]{$\inv{c}$} (m-2-4);
\draw[->]  (m-2-4)-- node [below,scale=.9]{$d$} (m-2-5);
\draw[<-]  (m-2-5)-- node [below,scale=.9]{$\inv{c}$} (m-2-6);
\draw[->]  (m-2-6)-- node [below,scale=.9]{$d$} (m-2-7);
\draw[<-]  (m-2-7)-- node [below,scale=.9]{$\inv{c}$} (m-2-8);
\draw[->]  (m-2-8)-- node [below,scale=.9]{$d$} (m-2-9);
\draw[<-]  (m-2-9)-- node [below,scale=.9]{$\inv{c}$} (m-2-10);
\draw[<-]  (m-2-10)-- node [below,scale=.9]{$\inv{a}$} (m-2-11);
\draw[->]  (m-2-11)-- node [below,scale=.9]{$b$} (m-2-12);
\draw[->]  (m-2-12)-- node [below,scale=.9]{$d$} (m-2-13);
\draw[<-]  (m-2-13)-- node [below,scale=.9]{$\inv{c}$} (m-2-14);
\draw[->]  (m-2-14)-- node [below,scale=.9]{$d$} (m-2-15);
\draw[<-]  (m-2-15)-- node [below,scale=.9]{$\inv{c}$} (m-2-16);
\draw[->]  (m-2-16)-- node [below,scale=.9]{$d$} (m-2-17);
\draw[<-]  (m-2-17)-- node [below,scale=.9]{$\inv{c}$} (m-2-18);
\draw[->]  (m-2-18)-- node [below,scale=.9]{$d$} (m-2-19);
\draw[<-]  (m-2-19)-- node [below,scale=.9]{$\inv{c}$} (m-2-20);
\draw[<-]  (m-2-20)-- node [below,scale=.9]{$\inv{a}$} (m-2-21);
\draw[<-]  (m-2-21)-- node [below,scale=.9]{$b$} (m-2-22);%===============
% braces
%===============
\draw [decorate,decoration={brace,amplitude=22pt,raise=4pt},xshift=0pt,yshift=1.2pt]
(m-1-2.center) -- (m-1-18.center)node [above,midway,yshift=25pt] {$\sigma$};
%%%%%
\draw [decorate,decoration={brace,amplitude=29pt,mirror,raise=4pt},xshift=0pt,yshift=1.2pt]
(m-2-2.center) -- (m-2-12.center)node [above,midway,yshift=-45pt] {$\tau$};
%===============
% identity maps  
%===============

\draw[transform canvas={xshift=-1.5pt}] (m-1-2) -- node [anchor=south east,scale=.9]{} (m-2-2);
\draw[transform canvas={xshift=1.5pt}] (m-1-2) --  (m-2-2);

\draw[transform canvas={xshift=-1.5pt}] (m-1-3) -- (m-2-3);
\draw[transform canvas={xshift=1.5pt}] (m-1-3) --  (m-2-3);

\draw[transform canvas={xshift=-1.5pt}] (m-1-4) -- (m-2-4);
\draw[transform canvas={xshift=1.5pt}] (m-1-4) --  (m-2-4);

\draw[transform canvas={xshift=-1.5pt}] (m-1-5) -- (m-2-5);
\draw[transform canvas={xshift=1.5pt}] (m-1-5) --  (m-2-5);

\draw[transform canvas={xshift=-1.5pt}] (m-1-6) -- (m-2-6);
\draw[transform canvas={xshift=1.5pt}] (m-1-6) --  (m-2-6);

\draw[transform canvas={xshift=-1.5pt}] (m-1-7) -- (m-2-7);
\draw[transform canvas={xshift=1.5pt}] (m-1-7) --  (m-2-7);

\draw[transform canvas={xshift=-1.5pt}] (m-1-8) -- (m-2-8);
\draw[transform canvas={xshift=1.5pt}] (m-1-8) --  (m-2-8);

\draw[transform canvas={xshift=-1.5pt}] (m-1-9) -- (m-2-9);
\draw[transform canvas={xshift=1.5pt}] (m-1-9) --  (m-2-9);

\draw[transform canvas={xshift=-1.5pt}] (m-1-10) -- (m-2-10);
\draw[transform canvas={xshift=1.5pt}] (m-1-10) --  (m-2-10);

\draw[transform canvas={xshift=-1.5pt}] (m-1-11) -- (m-2-11);
\draw[transform canvas={xshift=1.5pt}] (m-1-11) --  (m-2-11);

\draw[transform canvas={xshift=-1.5pt}] (m-1-12) -- (m-2-12);
\draw[transform canvas={xshift=1.5pt}] (m-1-12) --  (m-2-12);

\draw[transform canvas={xshift=-1.5pt}] (m-1-13) -- (m-2-13);
\draw[transform canvas={xshift=1.5pt}] (m-1-13) --  (m-2-13);

\draw[transform canvas={xshift=-1.5pt}] (m-1-14) -- (m-2-14);
\draw[transform canvas={xshift=1.5pt}] (m-1-14) --  (m-2-14);

\draw[transform canvas={xshift=-1.5pt}] (m-1-15) -- (m-2-15);
\draw[transform canvas={xshift=1.5pt}] (m-1-15) --  (m-2-15);

\draw[transform canvas={xshift=-1.5pt}] (m-1-16) -- (m-2-16);
\draw[transform canvas={xshift=1.5pt}] (m-1-16) --  (m-2-16);

\draw[transform canvas={xshift=-1.5pt}] (m-1-17) -- (m-2-17);
\draw[transform canvas={xshift=1.5pt}] (m-1-17) --  (m-2-17);

\draw[transform canvas={xshift=-1.5pt}] (m-1-18) -- (m-2-18);
\draw[transform canvas={xshift=1.5pt}] (m-1-18) --  (m-2-18);

\draw[transform canvas={xshift=-1.5pt}] (m-1-19) -- (m-2-19);
\draw[transform canvas={xshift=1.5pt}] (m-1-19) --  (m-2-19);

\draw[transform canvas={xshift=-1.5pt}] (m-1-20) -- (m-2-20);
\draw[transform canvas={xshift=1.5pt}] (m-1-20) --  (m-2-20);
\end{tikzpicture}
$};
\end{tikzpicture}
\]
We are now in case \eqref{case:2} of Theorem~\ref{thm:graph-band-to-band}. In this case, $\sigma = \tau (d \inv{c})^3$. Therefore, after setting $\theta = d \inv{c}$, we obtain $\M_{\f} \cong \B_{\theta,1} \oplus \B_{\theta,\omega} \oplus \B_{\theta, \omega^2}$, where $\omega$ is a primitive cube root of unity. In particular, the mapping cone of $\f$ has three indecomposable summands.

Similarly, for any $k, n \geq 1$, if $\sigma = (d \inv{c})^{n+k} \inv{a} b$ and  $\tau = (d \inv{c})^n \inv{a} b$ then $\sigma = \tau (d \inv{c})^k$. Therefore, after setting $\theta = d \inv{c}$, we obtain $\M_{\f} \cong \B_{\theta,1} \oplus \B_{\theta,\omega} \oplus \cdots \oplus \B_{\theta, \omega^{k-1}}$, where $\omega$ is a primitive $k^{\rm th}$ root of unity. This shows  that the mapping cone between two (indecomposable) band complexes can be a direct sum of arbitrarily many indecomposable band complexes.    
\end{example}

%========================================================================================
% SECTION 
\section{Mapping cones of quasi-graph maps involving a band complex} \label{sec:quasi}
%=======================================================================================

The next proposition extends \cite[Prop. 5.2(3) \& (4)]{CPS1} to the case involving a homotopy band and a homotopy string in which the overlap determining the quasi-graph map is longer than the homotopy band. The effect on the mapping cone calculus is `to add a copy of the homotopy band to the homotopy string' to give the homotopy string for the mapping cone. For clarity, we also restate the statement from \cite{CPS1} in which the overlap is shorter than the band, with notation revised to enable effective comparison with the other case. The proof of \cite[Prop. 5.2]{CPS1} holds without change. 

\begin{proposition} \label{prop:quasi-band-and-string}
Let $\sigma$ and $\tau$ be homotopy strings or bands and suppose $\phi \colon \Q_\sigma \rightsquigarrow \Sigma^{-1} \Q_\tau$ is a quasi-graph map determined by a maximal common homotopy substring $\rho$.  Assume further that $\sigma$ and $\tau$ are compatibly oriented for $\phi$ (see \cite[Def. 5.1]{CPS1}). Suppose $\f \colon \Q_\sigma \to \Q_\tau$ is a representative of the homotopy set determined by $\phi$. 
\begin{enumerate}
\item Suppose that  $(\sigma, \lambda)$ is a homotopy band  and $\tau$ is a homotopy string. 
\begin{enumerate}
\item {\rm (\cite[Prop. 5.2(3)]{CPS1})} Suppose $\rho$ is a proper subword of $\sigma$. Then, after suitable rotation, there is a decomposition $\sigma = \rho \alpha$ and a decomposition $\tau = \delta \rho \gamma$. Then $\M_{\f} \cong \P_c$, where $c = \delta \rho \alpha \rho \gamma$.
\item Suppose there is an integer $m \geq 1$ such that $\rho$ is a proper subword of $\sigma^{m+1}$ and $\sigma^m$ is a (not necessarily proper) subword of $\rho$. Then, after suitable rotation of $\sigma$, there is a decomposition $\sigma = \beta \alpha$ such that $\tau = \delta \sigma^m \beta \gamma$, i.e. $\rho = \sigma^m \beta$.
Then $\M_{\f} \cong \P_c$, where $c = \delta \sigma^{m+1} \beta \gamma$.
\end{enumerate}

\item Suppose that $\sigma $ is a homotopy string and $(\tau, \mu)$ is a homotopy band. 
\begin{enumerate}
\item  {\rm (\cite[Prop. 5.2(4)]{CPS1})} Suppose $\rho$ is a proper subword of $\tau$. Then, after suitable rotation, there is a decomposition $\tau = \rho \gamma$ and a decomposition $\sigma = \beta \rho \alpha$. Then $\M_{\f} \cong \P_c$, where $c = \beta \rho \gamma \rho \alpha$.
\item Suppose there is an integer $n \geq 1$ such that $\rho$ is a proper subword of $\tau^{n+1}$ and $\tau^n$ is a (not necessarily proper) subword of $\rho$. Then, after suitable rotation of $\tau$, there is a decomposition $\tau = \delta \gamma$ such that $\sigma = \beta \tau^n \delta \alpha$, i.e. $\rho = \tau^n \delta$.
Then $\M_{\f} \cong \P_c$, where $c = \beta \tau^{n+1} \delta \alpha$.
\end{enumerate}
\end{enumerate}
\end{proposition}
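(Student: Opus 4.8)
The plan is to notice that all four cases are governed by a single uniform rule, and that this rule is manifestly insensitive to the length of the overlap. Namely, in part (1) the mapping cone replaces the string $\tau = \delta\rho\gamma$ by the word obtained by splicing one copy of the band $\sigma$ in immediately before the overlap, so that $c = \delta\sigma\rho\gamma$; dually, in part (2) it replaces $\sigma = \beta\rho\alpha$ by $c = \beta\tau\rho\alpha$. The first thing I would do is check that these uniform formulas recover the four displayed expressions. In (1)(a), where $\sigma = \rho\alpha$, one has $\delta\sigma\rho\gamma = \delta\rho\alpha\rho\gamma$; in (1)(b), where $\rho = \sigma^m\beta$, one has $\delta\sigma\rho\gamma = \delta\sigma^{m+1}\beta\gamma$; and the two subcases of part (2) are entirely analogous, using $\tau = \rho\gamma$ and $\rho = \tau^n\delta$ respectively. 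Having made this reduction, the essential point is that the recipe \emph{insert one copy of the band at the overlap} makes no reference whatsoever to how many periods of the unfolded band the substring $\rho$ occupies; this is exactly why cases (1)(b) and (2)(b) demand no new argument beyond \cite[Prop. 5.2]{CPS1}.

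Next I would set up the unfolded diagram of a representative $\f \colon \Q_\sigma \to \Q_\tau$ of the homotopy class determined by $\phi$, precisely as in \cite[Prop. 5.2]{CPS1}, but now allowing $\rho$ to run through several periods of the unfolded band. Since $\sigma$ and $\tau$ are compatibly oriented for $\phi$, the nonzero components of $\f$ are concentrated at the two ends of the overlap, and the mapping cone $\M_{\f}$ is obtained by gluing the unfolded diagram of $\Sigma\Q_\sigma$ into that of $\Q_\tau$ along these components; note in particular that the two interior copies of $\rho$ (one from the band $\sigma$, one from $\tau$) both survive, which is what produces the band-insertion rather than any cancellation. Reading off the word of $\M_{\f}$, after the sign-normalisation of Step 3 in the proof of Theorem~\ref{thm:graph-band-to-string} and after applying Lemma~\ref{lem:homotopy} to remove identity components as needed, yields exactly $c = \delta\sigma\rho\gamma$ (respectively $c = \beta\tau\rho\alpha$). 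The effect of $\rho$ spanning $m$ (respectively $n$) full band periods rather than sitting inside a single one is only to lengthen the interior of the spliced region by repeating the same periodic pattern; it alters neither the two gluing components of $\f$ nor the local structure at the seams.

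The one place where something genuinely needs to be verified is at the two endpoints of $\rho$, where the inserted band copy meets $\delta$ and $\gamma$ (respectively $\beta$ and $\alpha$), and this I expect to be the only real obstacle. Here one must confirm that the first and last homotopy letters of the spliced band concatenate with the adjacent letters to give a bona fide homotopy string, with no two letters illegitimately merging. This is exactly the endpoint analysis already carried out in \cite[Prop. 5.2]{CPS1}: maximality of the common substring $\rho$, together with gentleness of $(Q,I)$, forces the relevant compositions either to vanish or to compose correctly at each seam. Since both seams lie at the ends of $\rho$ while the extra band periods lie strictly in its interior, this verification is word-for-word that of the original proposition, and is untouched by the extension to longer overlaps. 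This is precisely the content of the assertion that the proof of \cite[Prop. 5.2]{CPS1} holds without change.
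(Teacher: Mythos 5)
Your proposal is correct and takes essentially the same route as the paper, whose entire proof is the assertion that the proof of \cite[Prop. 5.2]{CPS1} holds without change: your uniform ``splice one copy of the band in at the overlap'' formulation (which does specialise correctly to all four displayed formulas) together with the observation that the seams sit at the two ends of $\rho$ while the extra band periods lie strictly in its interior is precisely the content of that assertion. One cosmetic point: a representative $\f$ of a quasi-graph map homotopy class has no identity components along the overlap (this is exactly what distinguishes quasi-graph maps from graph maps), so the cone is a string complex directly up to signs and your appeal to Lemma~\ref{lem:homotopy} is vacuous---but since you hedge with ``as needed'' and nothing in your argument depends on it, this is harmless.
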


The next theorem extends \cite[Prop. 5.2(2)]{CPS1} to the case involving two homotopy bands and a quasi-graph map determined by an overlap longer than (at least one of) the homotopy bands and corrects the statement in the case when the overlap is shorter than both homotopy bands. 
In both cases the conclusion is the same. Intuitively, this makes sense: the mapping cone associated with a quasi-graph map is actually the mapping cone of one of the maps in the homotopy equivalence class defined by the quasi-graph map. In particular, there is no means by which more than one copy of each band can appear in the word defining the mapping cone.

\begin{theorem} \label{thm:quasi-band-to-band}
Let $(\sigma,\lambda)$ and $(\tau,\mu)$ be homotopy bands and suppose $\phi \colon \B_{\sigma,\lambda} \rightsquigarrow \Sigma^{-1} \B_{\tau,\mu}$ is a quasi-graph map determined by a maximal common homotopy substring $\rho$.  Assume further that $\sigma$ and $\tau$ are compatibly oriented for $\phi$ (see \cite[Def. 5.1]{CPS1}). Suppose $\f \colon \B_{\sigma,\lambda} \to \B_{\tau,\mu}$ is a representative of the homotopy set determined by $\phi$. 
\begin{enumerate}[label=(\arabic*)]
\item If $\rho = \sigma = \tau$, $\lambda = \mu$ and $\phi \colon \B_{\sigma,\lambda} \rightsquigarrow \B_{\sigma,\lambda}$ (that is, $\f \colon \B_{\sigma,\lambda} \to \Sigma \B_{\sigma,\lambda}$), then $\M_{\f} \cong \Sigma \B_{\sigma,\lambda,2}$, where $\B_{\sigma,\lambda,2}$ is the $2$-dimensional band complex (see \cite[\S 5]{ALP}) and
\[
\B_{\sigma,\lambda} \too \B_{\sigma,\lambda,2} \too \B_{\sigma,\lambda} \rightlabel{\f} \Sigma \B_{\sigma,\lambda}
\]
is the Auslander--Reiten triangle starting and ending at $\B_{\sigma,\lambda}$.
\item \label{quasi:generic} Otherwise, we have either
\begin{enumerate}[label=(\alph*)]
\item $\sigma$ is a (not necessarily proper) subword of $\rho$, i.e. after suitable rotation there is a decomposition $\sigma = \beta \alpha$ and an integer $m$ such that $\rho = \sigma^m \beta$; or,
\item $\rho$ is a proper subword of  $\sigma$, i.e. after suitable rotation there is a decomposition $\sigma = \rho\alpha$,  
\end{enumerate}
and, either,
\begin{enumerate}[label=(\alph*),resume]
\item $\tau$ is a (not necessarily proper) subword of $\rho$, i.e. after suitable rotation there is a decomposition $\tau = \delta \gamma$ and an integer $n$ such that $\rho = \tau^n \delta$; or,
\item $\rho$ is a proper subword of $\tau$, i.e. after suitable rotation there is a decomposition $\tau = \rho\gamma$.  
\end{enumerate}
Then there is a homotopy band $\theta$ and an integer $k \geq 1$ such that %$\tau\sigma = \theta^k$ and 
\[
\theta^k =
\begin{cases}
\delta \gamma \beta \alpha & \text{ if (a) \& (c);} \\
\rho \gamma \beta \alpha & \text{ if (a) \& (d);} \\
\delta \gamma \rho \alpha & \text{ if (b) \& (c);} \\
\rho \gamma \rho \alpha & \text{ if (b) \& (d),}
\end{cases}
\quad \text{and} \quad
\M_{\f} \cong \bigoplus_{i=1}^k \B_{\theta,\omega^i \sqrt[k]{- \lambda \mu^{-1}}},
\]
where $\omega$ is a primitive $k^{\it th}$ root of unity.
\end{enumerate}
\end{theorem}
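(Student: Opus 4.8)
The plan is to dispose of statement~(1) at once and then to obtain statement~(2) by adapting the homotopy-reduction argument of Proposition~\ref{prop:quasi-band-and-string} to the situation where the resulting word closes up into a band. For statement~(1), the hypothesis $\rho = \sigma = \tau$ forces the overlap to be infinite: a maximal common homotopy substring that already exhausts one whole band must, by periodicity, wrap around indefinitely. Thus Corollary~\ref{cor:infinite}(2) applies, and, as recorded in the discussion following that corollary, $\M_{\f} \cong \Sigma\B_{\sigma,\lambda,2}$ is the shift of the middle term of the Auslander--Reiten triangle at $\B_{\sigma,\lambda}$. Only the identification of $\f$ with the connecting morphism of that triangle needs to be spelled out, which is exactly the Serre-duality observation of \cite[Ex. 5.9]{ALP} and \cite[Rem. 1.8]{CPS2}.

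For statement~(2) I would first fix a representative $\f \colon \B_{\sigma,\lambda} \to \B_{\tau,\mu}$ of the homotopy class of $\phi$ — the double maps sitting at the two ends of $\rho$, as in \cite{CPS1} — and write down the unfolded diagram of $\M_{\f}$; since homotopic maps have isomorphic cones, the choice of representative is immaterial. Exactly as in the proof of Proposition~\ref{prop:quasi-band-and-string}, iterated application of Lemma~\ref{lem:homotopy} strips out the identity morphisms arising from the overlap and leaves a complex whose unfolded diagram is a single word $c$. The one structural novelty relative to the band--string case is that $\tau$ is now also a band, so $c$ has no free ends and closes up cyclically; concretely, in each of the four case-combinations the word is, after the indicated rotations, the single concatenation $\tau\sigma$ of one copy of each band. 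For instance in case~(b)\&(d) one has $c = \rho\gamma\rho\alpha = (\rho\gamma)(\rho\alpha) = \tau\sigma$, and the remaining three words $\delta\gamma\beta\alpha$, $\rho\gamma\beta\alpha$, $\delta\gamma\rho\alpha$ are likewise equal to $\tau\sigma$. This is precisely the sense in which no more than one copy of each band can appear.

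It then remains to verify that $c = \tau\sigma$ is a (possibly nontrivial) power $\theta^k$ of a homotopy band and to determine the scalar. The combinatorial closure — that the first and last homotopy letters of $c$ remain distinct and compose as a homotopy band — is checked by running the same endpoint and gentleness case analysis on the orientations of the boundary homotopy letters of $\sigma$ and $\tau$ that appears in the proof of Theorem~\ref{thm:graph-band-to-band}\eqref{case:2}. Granting that $\M_{\f} \cong \B_{\theta^k, -\lambda\mu^{-1}}$, Lemma~\ref{lem:power-of-band} then delivers the decomposition $\bigoplus_{i=1}^{k} \B_{\theta, \omega^i \sqrt[k]{-\lambda\mu^{-1}}}$, which is where algebraic closedness of $\kk$ is used.

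The crux, and the one genuinely delicate point, is the scalar bookkeeping — in particular why $\tau$'s scalar enters inverted as $\mu^{-1}$, in contrast with the product $\lambda\mu$ of the graph-map Theorem~\ref{thm:graph-band-to-band}. The reason is structural: in $\M_{\f}$ the differential component running along $\B_{\tau,\mu}$ sits in the position occupied by the block $\mu \cdot \mathds{1}$ to which Lemma~\ref{lem:homotopy} is applied, and that lemma divides the corresponding scalar out as an inverse; consequently a single traversal of the closed word $\theta^k$ collects the factor $\lambda$ from the closing arrow of $\sigma$ but $\mu^{-1}$ from that of $\tau$, while the sign $-1$ is inherited from the shifted summand $\Sigma\B_{\sigma,\lambda}$ of the cone, whose differential is $-d_{\B_{\sigma,\lambda}}$. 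Propagating this factor correctly through every application of Lemma~\ref{lem:homotopy} is the main labour; alternatively, one may read off the scalar from Theorem~\ref{thm:graph-band-to-band} by transporting along the Serre-duality correspondence between a quasi-graph map and the graph map given by the same overlap in the opposite direction, under which $\mu \mapsto \mu^{-1}$.
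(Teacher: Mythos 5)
Your overall architecture coincides with the paper's: statement (1) is handled by Corollary~\ref{cor:infinite} together with the Serre-duality/Auslander--Reiten discussion following it, and statement (2) is obtained by identifying $\M_{\f}$ with the band complex of the single concatenation $\tau\sigma$ carrying the scalar $-\lambda\mu^{-1}$, and then invoking Lemma~\ref{lem:power-of-band} to split off summands when $\tau\sigma = \theta^k$ is a proper power. This is exactly the paper's proof (which simply cites the proof of \cite[Prop.~5.2]{CPS1} for the first identification), and your observation that all four case-words are, after rotation, the concatenation $\tau\sigma$ is also the paper's remark after the theorem.

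However, the mechanism you give for the first identification is the graph-map mechanism, and it cannot produce the stated answer. With the representative you yourself fix (maps supported only at the two ends of $\rho$), the differential of $\M_{\f}$ has no identity components along the overlap at all, so ``iterated application of Lemma~\ref{lem:homotopy} to strip out the identity morphisms arising from the overlap'' is vacuous --- and it must be, because for a quasi-graph map nothing may be cancelled: the cone word retains \emph{both} copies of $\rho$ and has length $\length{\sigma}+\length{\tau}$ (e.g.\ $\rho\gamma\rho\alpha$ in case (b)\&(d)). Had the overlap been stripped as you describe, you would land on the graph-map word of Theorem~\ref{thm:graph-band-to-band}\eqref{case:4}, of length $\length{\sigma}+\length{\tau}-2\length{\rho}$, contradicting your own (correct) conclusion $c=\tau\sigma$; this non-cancellation is precisely the point the paper emphasizes just before the theorem. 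The actual content of the computation in \cite[Prop.~5.2]{CPS1} is an explicit change-of-basis isomorphism between the \emph{uncancelled} cone (two periodic unfolded rows joined only by the end components of $\f$) and the band complex of the spliced word. For the same reason, your explanation of the exponent on $\mu$ --- as an inverse ``divided out'' by Lemma~\ref{lem:homotopy} --- is unfounded; the inversion comes from the orientation conventions under which $\tau$ is traversed when the rows are spliced (compare Proposition~\ref{prop:bands-double}, where the cone word runs through $\inv{\tau_L}\,\inv{\delta}\,\inv{\gamma}\,\inv{\tau_R}$, i.e.\ through $\tau$ backwards, with scalar $-\lambda\mu^{-1}$), and in the paper it is inherited verbatim from \cite[Prop.~5.2]{CPS1} rather than re-derived.
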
 

\begin{proof}
The proof that $\M_{\f} \cong \B_{\tau\sigma,-\lambda\mu^{-1}}$, where $\B_{\tau\sigma,-\lambda\mu^{-1}}$ is the complex induced by the unfolded diagram of the concatenation of the two bands $\tau\sigma$, proceeds exactly as in the proof of \cite[Prop. 5.2]{CPS1}. The argument given in {\it loc. cit.} also shows that $\sigma\tau$ is a (power of a) homotopy band, but did not rule out the case that it is a proper power, i.e. that there may be an integer $k>1$ such that $\tau \sigma = \theta^k$. In this case the decomposition  $\B_{\tau\sigma,-\lambda\mu^{-1}} \cong \bigoplus_{i=1}^k \B_{\theta,\omega^i \sqrt[k]{- \lambda \mu^{-1}}}$ is given by Lemma~\ref{lem:power-of-band}, completing the argument in \cite[Prop. 5.2]{CPS1}.
\end{proof}

\begin{remark}
In Theorem~\ref{thm:quasi-band-to-band}, each of the words defining $\theta^k$ is, after suitable rotation of $\sigma$ and $\tau$ just the concatenation of the two homotopy bands, $\tau \sigma$. However, different possibilities for $\theta$ arise from the precise decompositions of $\sigma$ and $\tau$: for different $\rho$, concatenations $\tau \sigma$ with respect to different decompositions need not be equivalent up to inverting the word or cyclic permutation.
\end{remark}

\begin{example}
Let $\sigma$ and $\tau$ be the homotopy bands given in Example~\ref{ex:two-kroneckers} and $\phi \colon \B_{\sigma,1} \rightsquigarrow \B_{\tau,1}$ be defined by the unfolded diagram \eqref{unfolded}. Let $\f \colon \B_{\sigma,1} \to \Sigma \B_{\tau,1}$ be one of the maps defined by the homotopy class determined by $\phi$. In this case, $\rho = \sigma d \inv{c}$ so that we have the decompositions $\sigma = \beta \alpha$ with $\beta = d \inv{c}$ and $\alpha = \inv{a} b (d \inv{c})^2 \inv{a} b$. Similarly, $\rho = \tau^2$ so that we have the decomposition $\tau = \delta \gamma$ with $\delta = \emptyset$ and $\gamma = \tau = d \inv{c}\, \inv{a} b d \inv{c}$. This puts us in cases (a) and (c) of statement \ref{quasi:generic} of Theorem~\ref{thm:quasi-band-to-band}. Hence, there is an integer $k$ and a homotopy band $\theta$ such that
\[
\theta^k = \delta \gamma \beta \alpha 
= d \inv{c} \, \inv{a} b (d \inv{c})^2 \inv{a} b (d \inv{c})^2 \inv{a} b.
\]
In particular, it follows that in this case $k = 1$. Hence, $\M_{\f} \cong \B_{\theta, -1}$, and the mapping cone of $\f$ has only one indecomposable summand.
\end{example}

%========================================================================================
% SECTION 
\section{Mapping cones of single and double maps involving two band complexes} \label{sec:singleton}
%=======================================================================================

In \cite[Prop. 3.4 \& Prop. 4.2]{CPS1} the possibility that the word defining the mapping cone was a nontrivial homotopy band was not considered. The propostions below complete and correct those statements. The proofs are the same as in \cite{CPS1} up to applying Lemma~\ref{lem:power-of-band} in the case that the resulting word is a nontrivial power of a band.

\begin{proposition} \label{prop:bands-single}
Suppose $(\sigma,\lambda)$ and $(\tau,\mu)$ are homotopy bands and $\f \colon \B_{\sigma,\lambda} \to \B_{\tau,\mu}$ is a single map with single component $f$. Suppose that $\sigma = \beta \sigma_L \sigma_R \alpha$ and $\tau = \delta \tau_L \tau_R \gamma$ are compatibly oriented (in the sense of \cite[Def. 3.1]{CPS1}) for $\f$. 
Then, there exists a homotopy band $\theta$ and an integer $k \geq 1$ such that 
$\theta^k = \beta \sigma_L f \inv{\tau_L} \inv{\delta} \inv{\gamma} \, \inv{\tau_R} \inv{f} \sigma_R \alpha$ 
and
\[
\M_{\f} \cong \bigoplus_{i=1}^k \B_{\theta, \omega^i \sqrt[k]{-\lambda\mu^{-1}}},
\] 
where $\omega$ is a primitive $k^{\it th}$ root of unity.
\end{proposition}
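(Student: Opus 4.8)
The plan is to follow the structure of the proof of \cite[Prop. 3.4]{CPS1} essentially verbatim, making only the single modification that is needed to account for the possibility that the resulting combinatorial word is a nontrivial power of a homotopy band. First I would recall the mapping cone construction for a single map: given the single map $\f \colon \B_{\sigma,\lambda} \to \B_{\tau,\mu}$ with single nonzero component $f$, the mapping cone $\M_{\f}$ is the complex $\B_{\tau,\mu} \oplus \Sigma \B_{\sigma,\lambda}$ equipped with the differential twisted by $f$. Unfolding this complex into its unfolded diagram produces, by the combinatorial analysis of \cite[Prop. 3.4]{CPS1}, a single closed word obtained by splicing the unfolded diagram of $\sigma$ to that of $\tau$ along the component $f$ and its formal inverse; this is precisely the word $w \coloneqq \beta \sigma_L f \inv{\tau_L} \inv{\delta} \inv{\gamma} \, \inv{\tau_R} \inv{f} \sigma_R \alpha$ appearing in the statement. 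The compatibility-of-orientation hypothesis (\cite[Def. 3.1]{CPS1}) is exactly what guarantees that this splicing is well-defined and that $w$ is a legitimate homotopy word (closed up to the relations of $(Q,I)$).

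The next step is to verify that $w$ is a \emph{homotopy band}, i.e. a closed word that is genuinely cyclic with the correct endpoint/nontriviality conditions, and to track the scalar. The argument of \cite[Prop. 3.4]{CPS1} already establishes that $w$ starts and ends at the same vertex and in the same degree, that the two letters meeting at the splice point remain distinct homotopy letters by gentleness of $(Q,I)$, and that the scalar data of $\B_{\sigma,\lambda}$ and $\B_{\tau,\mu}$ combine to place the scalar $-\lambda\mu^{-1}$ on a direct homotopy letter of $w$. Thus, as a word, $\M_{\f} \cong \B_{w, -\lambda\mu^{-1}}$, where $\B_{w,-\lambda\mu^{-1}}$ denotes the complex induced by the unfolded diagram of $w$ with $-\lambda\mu^{-1}$ placed on a direct letter. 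This is exactly the point at which \cite{CPS1} stopped, implicitly (and erroneously) assuming $w$ was primitive.

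The crucial correction, and the only genuinely new input, is that $w$ need only be a \emph{power} of a primitive homotopy band rather than primitive itself. Since $w$ is a homotopy band it factors uniquely as $w = \theta^k$ for a primitive homotopy band $\theta$ and some integer $k \geq 1$; this is the standard fact that any closed word has a well-defined primitive root. With this factorization in hand, I would invoke Lemma~\ref{lem:power-of-band} directly: applying it with $\sigma$ replaced by $w$, $\tau$ replaced by $\theta$, $n$ replaced by $k$, and $\lambda$ replaced by $-\lambda\mu^{-1}$ yields
\[
\M_{\f} \cong \B_{w,-\lambda\mu^{-1}} = \B_{\theta^k, -\lambda\mu^{-1}} \cong \bigoplus_{i=1}^k \B_{\theta, \omega^i \sqrt[k]{-\lambda\mu^{-1}}},
\]
where $\omega$ is a primitive $k^{\text{th}}$ root of unity, which is the desired decomposition. (It is here that the hypothesis that $\kk$ is algebraically closed, built into Lemma~\ref{lem:power-of-band}, enters; the existence of the $k^{\text{th}}$ roots is what forces the cone to split.)

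The main obstacle, and the part that warrants the most care, is the middle step: justifying that $w$ is indeed a homotopy band (and correctly identifying the scalar as $-\lambda\mu^{-1}$ on a direct letter) rather than merely a formal closed word. Concretely, one must check at the two splice points $\sigma_L f \inv{\tau_L}$ and $\inv{\tau_R}\inv{f}\sigma_R$ that gentleness prevents the adjacent letters from composing or cancelling, and that after taking the homotopy letter partition the word does not collapse to a proper rotation of a shorter band in a way that would alter the scalar bookkeeping. Since this verification is precisely the content of \cite[Prop. 3.4]{CPS1} and is unaffected by the passage from a primitive band to a power of a band, I would simply remark that the argument there applies unchanged up to the point of producing $\B_{w,-\lambda\mu^{-1}}$, and then append the application of Lemma~\ref{lem:power-of-band} as above.
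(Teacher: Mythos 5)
Your proposal is correct and matches the paper's proof exactly: the paper disposes of this proposition by noting that the argument of \cite[Prop.\ 3.4]{CPS1} carries over unchanged to produce $\B_{w,-\lambda\mu^{-1}}$ for the stated word $w$, and then appends Lemma~\ref{lem:power-of-band} (with its algebraic-closedness hypothesis) to handle the case $w=\theta^k$ with $k>1$, which is precisely your plan. Your write-up is in fact more detailed than the paper's one-line justification, and the one point you flag as needing care (that $w$ is a genuine closed homotopy band with scalar $-\lambda\mu^{-1}$ on a direct letter) is indeed the content already established in \cite{CPS1}.
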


\begin{proposition} \label{prop:bands-double}
Suppose $(\sigma,\lambda)$ and $(\tau,\mu)$ are homotopy bands and $\f \colon \B_{\sigma,\lambda} \to \B_{\tau,\mu}$ is a double map with components $(f_L,f_R)$. Decompose $\sigma = \beta \sigma_L \sigma_C \sigma_R \alpha$ and $\tau = \delta \tau_L \tau_C \tau_R \gamma$ with respect to $\f$. 
Then, there exists a homotopy band $\theta$ and an integer $k \geq 1$ such that 
$\theta^k = \beta \sigma_L f_L \inv{\tau_L} \inv{\delta} \inv{\gamma} \, \inv{\tau_R} \inv{f_R} \sigma_R \alpha$
and 
\[
\M_{\f} \cong \bigoplus_{i=1}^k \B_{\theta, \omega^i \sqrt[k]{-\lambda\mu^{-1}}},
\] 
where $\omega$ is a primitive $k^{\it th}$ root of unity.
\end{proposition}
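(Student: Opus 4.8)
The plan is to follow the argument of \cite[Prop. 4.2]{CPS1} essentially verbatim for the computation of the word defining the mapping cone, and then to invoke Lemma~\ref{lem:power-of-band} at the final step to account for the possibility---overlooked in \cite{CPS1}---that the resulting word is a nontrivial power of a homotopy band. Concretely, the first step is to write down the mapping cone $\M_{\f}$ of the double map $\f$ with components $(f_L, f_R)$ directly from its unfolded diagram, where the decompositions $\sigma = \beta \sigma_L \sigma_C \sigma_R \alpha$ and $\tau = \delta \tau_L \tau_C \tau_R \gamma$ are taken compatibly oriented with respect to $\f$. The two identity morphisms coming from the band structure (the scalars $\lambda$ on $\sigma$ and $\mu$ on $\tau$, placed on direct homotopy letters) appear in the differential of $\M_{\f}$, and the technique is to remove them using Lemma~\ref{lem:homotopy}. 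Repeatedly applying this lemma---exactly as in the proof of Theorem~\ref{thm:graph-band-to-string}---collapses the diagram and glues the two bands together along the components $f_L$ and $f_R$, producing a single complex whose unfolded diagram is given by the word
\[
w = \beta \sigma_L f_L \inv{\tau_L} \inv{\delta} \inv{\gamma} \, \inv{\tau_R} \inv{f_R} \sigma_R \alpha,
\]
with the scalar $-\lambda \mu^{-1}$ placed on a direct homotopy letter. This identifies $\M_{\f} \cong \B_{w, -\lambda\mu^{-1}}$.

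The second step is to verify that $w$ is indeed (a power of) a homotopy band. That $w$ is cyclic---i.e.\ starts and ends at the same vertex of $Q$, in the same cohomological degree, and that its first and last homotopy letters are distinct and admissible when concatenated cyclically---follows from the compatible-orientation hypothesis and gentleness of $(Q,I)$, by the same endpoint and gentleness analysis used in \cite[Prop. 4.2]{CPS1}; I would cite that argument rather than reproduce it. The only genuinely new point, and the one that \cite{CPS1} missed, is that being a homotopy band does not preclude $w$ from being a \emph{proper} power $w = \theta^k$ for some primitive homotopy band $\theta$ and some $k > 1$.

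The final step handles this possibility. Since $w = \theta^k$, Lemma~\ref{lem:power-of-band} (which requires $\kk$ algebraically closed) applies to the complex $\B_{w, -\lambda\mu^{-1}} = \B_{\theta^k, -\lambda\mu^{-1}}$ and yields
\[
\M_{\f} \cong \B_{\theta^k, -\lambda\mu^{-1}} \cong \bigoplus_{i=1}^{k} \B_{\theta, \omega^i \sqrt[k]{-\lambda\mu^{-1}}},
\]
where $\omega$ is a primitive $k^{\it th}$ root of unity and $\sqrt[k]{-\lambda\mu^{-1}}$ is any fixed $k^{\it th}$ root, as claimed. I expect the main obstacle to be bookkeeping in the first step rather than anything conceptual: one must track the precise placement of the scalars $\lambda$ and $\mu$ relative to the direct homotopy letters as Lemma~\ref{lem:homotopy} is iterated, since the sign and the scalar $-\lambda\mu^{-1}$ attached to the resulting band depend on this placement (the minus sign arising from the differential of the mapping cone, and the inverse on $\mu$ arising because $\tau$ is traversed in the opposite direction when the two bands are glued). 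Once the word $w$ and its scalar are correctly pinned down, the decomposition is an immediate application of Lemma~\ref{lem:power-of-band}.
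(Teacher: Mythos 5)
Your proposal is correct and follows essentially the same route as the paper: the paper's proof of this proposition is literally "the same as in [CPS1, Prop.~4.2] up to applying Lemma~\ref{lem:power-of-band} in the case that the resulting word is a nontrivial power of a band," which is exactly your structure (CPS1's argument identifies $\M_{\f}$ with the complex on the word $\beta \sigma_L f_L \inv{\tau_L} \inv{\delta} \inv{\gamma} \, \inv{\tau_R} \inv{f_R} \sigma_R \alpha$ with scalar $-\lambda\mu^{-1}$, and the genuinely new step is the decomposition via Lemma~\ref{lem:power-of-band} when that word is $\theta^k$ with $k>1$). Your identification of the proper-power possibility as the precise oversight in \cite{CPS1}, and of the algebraic-closedness hypothesis as what Lemma~\ref{lem:power-of-band} requires, matches the paper's intent.
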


Finally, we conclude with a specific example showing that it is possible to have a singleton single map between two band complexes whose mapping cone is a `power of a band'.

\begin{example}
Let $\kk = \bC$ and $\Lambda$ be the $\bC$-algebra given by the following quiver with relations.
\[
\begin{tikzpicture}[scale=1.2]
\node (A3) at (0,2){$1$};
\node (B2) at (1,1){$3$};
\node (B3) at (1,2){$2$};
\node (C3) at (2,2){$4$};
\path[->,font=\scriptsize,>=angle 90]
(B2)edge node[right]{$b$}(B3)
(A3)edge node[above]{$a$}(B3)
(B3)edge node[above]{$d$}(C3)
(A3)edge node[below]{$c$}(B2)
(B2)edge node[below]{$e$}(C3)
;
\draw[thick,dotted] (1.3,2) arc (0:170:.3cm) 
(.7,1.2) arc (170:385:.35cm) 
;
\end{tikzpicture}
\]
Let $\sigma = \inv{a} bc \inv{a} b \inv{e} dbc$ and $\tau = e \inv{b} \, \inv{d}$. Consider the singleton single map $\f \colon \B_{\sigma, 36} \to \B_{\tau,4}$ given by the unfolded diagram below,
\[
\begin{tikzpicture}
\node[scale=.9] at (0,0){$
\begin{tikzpicture}
\matrix (m) [matrix of math nodes,row sep=2em,column sep=2.4em,minimum width=2em]
  {
  4 & 1 & 2 & 1 & 2 & 3 & 4 & 1 & 2 \\
   3 & 4 & 3 & 4 & 3 &   &   &   & \\};
        
\draw[->] (m-1-1)-- node [pos=.2,anchor=south west,scale=.9]{$dbc$} (m-1-2);
\draw[<-] (m-2-1)-- node [pos=.4,anchor=north west,scale=.9]{$\diaginv{b} \, \diaginv{d}$} (m-2-2);     

\draw[<-]  (m-1-2)-- node [above,scale=.9]{$\diaginv{a}$} (m-1-3);
\draw[->]  (m-2-2)-- node [below,scale=.9]{$e$} (m-2-3);

\draw[->] (m-1-3)-- node [above,scale=.9]{$bc$} (m-1-4);
\draw[<-] (m-2-3)-- node [pos=.4,below,scale=.9]{$\diaginv{b} \, \diaginv{d}$} (m-2-4);

\draw[<-] (m-1-4)-- node [above,scale=.9]{$\diaginv{a}$} (m-1-5);
\draw[->] (m-2-4)-- node [below,scale=.9]{$e$} (m-2-5);

\draw[->] (m-1-5)-- node [above,scale=.9]{$b$} (m-1-6);

\draw[<-] (m-1-6)-- node [above,scale=.9]{$\diaginv{e}$} (m-1-7);

\draw[->] (m-1-7)-- node [above,scale=.9]{$dbc$} (m-1-8);

\draw[<-] (m-1-8)-- node [above,scale=.9]{$\diaginv{a}$} (m-1-9);

%===============
\draw[->] (m-1-3)-- node [right,scale=.9]{$b$} (m-2-3);
%================
%dotted boxes
%================
%
\node (A) [xshift=2em,left of=m-1-2, yshift=-1.4em,above of=m-1-2] {};
\node (B) [xshift=3em,left of=m-1-2, yshift=-1.4em,above of=m-1-8] {};
\node (A') [yshift=.8em,below of=A] {};
\node (B') [yshift=.8em,below of=B] {};

\node (C) [xshift=2em,left of=m-2-2, yshift=-2.1em,above of=m-2-2] {};
\node (D) [xshift=3.4em,left of=m-2-4, yshift=-2.1em,above of=m-2-4] {};
\node (C') [yshift=.75em,below of=C] {};
\node (D') [yshift=.75em,below of=D] {};
\draw[dotted, line width=.7] (A.center) -- (B.center) -- (B'.center) -- (A'.center) --(A.center);
\draw[dotted, line width=.7] (C.center) -- (D.center) -- (D'.center) -- (C'.center) --(C.center)
;
\end{tikzpicture}
$};
\end{tikzpicture}
\]
%=======================
where one copy of each band $\sigma$ and $\tau$ is highlighted by the dotted boxes. Then we have
\[
\beta \sigma_L f \inv{\tau_L} \inv{\delta} \inv{\gamma} \, \inv{\tau_R} \inv{f} \sigma_R \alpha 
= \inv{a} b \inv{e} dbc \inv{a} b \inv{e} dbc
= (\inv{a} b \inv{e} dbc)^2.
\]
Thus, setting $\theta = \inv{a} b \inv{e} dbc$, we have 
$\M_{\f} = \B_{\theta,3i} \oplus \B_{\theta, -3i}$.
\end{example}

%===============================================================
% Bibliography
%===============================================================


\begin{thebibliography}{99}

\bibitem{ALP}
K.~K.~Arnesen, R.~Laking, D~Pauksztello, \textit{Morphisms between indecomposable complexes in the bounded derived category of a gentle algebra}, J. Algebra \textbf{467} (2016), 1--46, also \harxiv{1411.7644}.

\bibitem{BCS-addendum}
K.~Baur, R.~Coelho Sim\~oes, \textit{Addendum: A geometric model for the module category of a gentle algebra}, in preparation.

\bibitem{BM}
V.~Bekkert, H.~Merklen, \textit{Indecomposables in derived categories of gentle algebras}, Algebr. Represent. Theory \textbf{6} (2003), 285--302.

\bibitem{Bo}
G.~Bobi\'nski, \textit{The almost split triangles for perfect complexes over gentle algebras}, J. Pure Appl. Algebra \textbf{215} (2011), 642--654, also \harxiv{0903.5140}.

\bibitem{BD}
I.~Burban, Y.~Drozd, \textit{Derived categories of nodal algebras}, J. Algebra \textbf{272} (2004), 46--94, also \harxiv{math/0307060}.

\bibitem{CPS1}
\.{I}.~\c{C}anak\c{c}i, D. Pauksztello, S. Schroll, \textit{Mapping cones in the bounded derived category of a gentle algebra}, J. Algebra {\bf 530} (2019), 163--194, also \harxiv{1609.09688}.

\bibitem{CPS2}
\.{I}.~\c{C}anak\c{c}i, D. Pauksztello, S. Schroll, \textit{On extensions for gentle algebras}, \harxiv{1707.06934}.

\bibitem{LP} 
Y. Lekili, A. Polishchuk,  \textit{Derived equivalences of gentle algebras via Fukaya categories}, \harxiv{1801.06370}.

\bibitem{OPS} 
S. Opper, P.-G. Plamondon, S. Schroll, \textit{A geometric model for the derived category of gentle algebras},  \harxiv{1801.09659}.

\end{thebibliography}
\end{document}